\documentclass[twoside,11pt]{article}

\usepackage{amsmath, mathtools, amssymb, amsthm}
\usepackage{natbib}
\usepackage{microtype}
\usepackage{enumerate, multicol}
\usepackage{bbm}
\usepackage[linesnumbered,ruled,vlined,algo2e]{algorithm2e}
\usepackage[font=small,labelfont=bf]{caption}
\usepackage{array} 
\usepackage[table]{xcolor}
\usepackage{tikz}
\usetikzlibrary{positioning, arrows, shapes.geometric, calc}
\usepackage{fullpage}
\usepackage[colorlinks=true, linkcolor=blue, urlcolor=blue, citecolor=blue]{hyperref}
\usepackage{algorithm,algorithmic}
\usepackage{breakcites}


\definecolor{gaussianfill}{RGB}{220,230,250}
\definecolor{kssgfill}{RGB}{220,250,230}
\definecolor{usgfill}{RGB}{245,230,250}

\SetKwInput{KwInput}{Input}
\SetKwInput{KwOutput}{Output}
\SetKwRepeat{KwRepeat}{repeat}{until}

\theoremstyle{plain}
\newtheorem{theorem}{Theorem}[section]
\newtheorem{lemma}[theorem]{Lemma}

\newtheorem{remark}[theorem]{Remark}
\newtheorem{definition}[theorem]{Definition}
\numberwithin{equation}{section}

\newcommand{\EE}{\mathbb{E}}

\newcommand{\PP}{\mathbb{P}}

\newcommand{\RR}{\mathbb{R}}

\newcommand{\diam}{\mathrm{diam}}

\newcommand{\argmin}{\mathop{\mathrm{argmin}}}

\def\T{^\top}

\allowdisplaybreaks

\begin{document}

\begin{center}
{\LARGE Sparse Convexification for High-Dimensional Constrained Regression}

{\large
\begin{center}
Matey Neykov
\end{center}}

{Department of Statistics and Data Science, Northwestern University\\
\texttt{mneykov@northwestern.edu}}
\end{center}

\begin{abstract}
We study high-dimensional linear regression under a general symmetric convex constraint.  Rather than imposing a specific sparsity-inducing penalty, we start from an arbitrary sign-symmetric and permutation-invariant convex body $K\subseteq \mathbb R^p$ and construct the sparse convexification hierarchy
\[
K^{(s)} = \operatorname{conv}\{v\in K:\|v\|_0\le s\}.
\]
We propose a penalized least-squares estimator that searches over this hierarchy and adapts to the best sparse convex approximation of the target.  Under standard sub-Gaussian assumptions on the random design and noise, we prove an oracle inequality showing that the estimator adapts to the best
sparse convex approximation of the target.  For an $s$-sparse target, the
result yields a squared-error rate governed by the noise level $\sigma$, and the Gaussian width of the sparse convexification $K^{(s)}$.  The method applies broadly to symmetric norm balls and can be implemented using oracle access to the Minkowski functional of $K$. As a special case, the framework yields a consistency result for the constrained Lasso.
\end{abstract}

\tableofcontents
\section{Introduction}

High-dimensional linear regression is a central problem in modern statistics. One observes
\begin{align*}
    Y = X\beta+\xi,
\end{align*}
where $X\in\mathbb R^{N\times p}$ is a design matrix (in this paper we assume random design), $\xi\in\mathbb R^N$ is noise, and the ambient dimension $p$ may be much larger than the sample size $N$. In this regime, consistent estimation of an arbitrary vector $\beta\in\mathbb R^p$ is impossible without structural assumptions. The most classical such assumption is sparsity. If $\beta$ has at most $s\ll p$ nonzero coordinates, then the effective dimension of the problem is of order $s\log(ep/s)$, rather than $p$.

This observation underlies a vast literature on sparse high-dimensional regression. The Lasso \citep{tibshirani1996regression}, basis pursuit and related $\ell_1$-methods \citep{chen2001atomic,donoho2006compressed,candes2006near}, the Dantzig selector \citep{candes2007dantzig}, and subsequent oracle inequalities for the Lasso and related procedures \citep{bickel2009simultaneous,van2009conditions,buhlmann2011statistics} showed that sparse vectors can be estimated at rates depending only logarithmically on the ambient dimension. More generally, high-dimensional $M$-estimation with decomposable regularizers has provided a powerful framework for studying sparse vectors, low-rank matrices, group sparsity, and other structured models; see, for example, \cite{negahban2012unified}.

Most of this theory relies on a specific regularizer or on decomposability properties tailored to the structure of interest. In contrast, this paper considers a different setting. We begin with an arbitrary convex constraint $K\subset\mathbb R^p$, assumed only to be symmetric in the sense of being invariant under sign changes and coordinate permutations. Equivalently, $K$ is the unit ball of a norm whose Minkowski functional
\begin{align}\label{gauge}
    \|x\|_K := \inf\{t>0:x\in tK\}
\end{align}
satisfies
\begin{align}\label{symmetric:norm}
    \|(x_1,\ldots,x_p)\|_K
    =
    \|(\varepsilon_1 x_{\pi(1)},\ldots,\varepsilon_p x_{\pi(p)})\|_K
\end{align}
for every choice of signs $\varepsilon_j\in\{-1,1\}$ and every permutation $\pi$ of $\{1,\ldots,p\}$. This class includes the usual $\ell_q$-balls, Lorentz and convex weak-$\ell_q$ balls, SLOPE-type norm balls, and many other permutation-invariant constraints. We assume access to $K$ only through its Minkowski functional. Thus our goal is not to exploit a special formula for a particular norm, but to develop a general high-dimensional estimation method for symmetric norm constraints.

The main idea of the paper is to sparsify a convex constraint by convexification. For $1\le s\le p$, define
\begin{align*}
    K^{(s)}
    :=
    \operatorname{conv}
    \left\{
        v\in K:\ \|v\|_0\le s
    \right\}.
\end{align*}
The convex-hull construction defining $K^{(s)}$ has previously been studied in the optimization literature for sign- and permutation-invariant norm balls; see Kim, Tawarmalani, and Richard (2022) \cite{kim2022convexification}, as well as the earlier $k$-support norm construction of Argyriou, Foygel, and Srebro (2012) \cite{argyriou2012sparse} for the special case $K=B_2$.

The set $K^{(s)}$ is the convex hull of the $s$-sparse points of $K$.  It is convex by definition, but it retains the statistical complexity of sparse vectors. We then estimate $\beta$ by penalized least squares over the hierarchy $\{K^{(s)}\}_{s=1}^p$:
\begin{align*}
    (\widehat s,\widehat\beta)
    \in
    \argmin_{\substack{1\le s\le p\\ \nu\in K^{(s)}}}
    \bigg\{
        \frac1N\|Y-X\nu\|_2^2
        &+
        \lambda
        \sigma \bigg(\frac{w(K^{(s)}) \sqrt{\log(e + s)}}{\sqrt{N}} + \frac{w(K^{(s)}) \log(e + s)}{N}\bigg) \nonumber\\
        &+ \lambda  \frac{w^2(K^{(s)}) \log(e + s)}{N}
    \bigg\},
\end{align*}
where $\sigma$ is the noise level (i.e., the sub-Gaussian parameter of $\xi_i$), and $\lambda>0$ is a sufficiently large (i.e., larger than a sufficiently large absolute constant) tuning parameter, and $w(K^{(s)})$ stands for (a computable upper bound of) the Gaussian width of $K^{(s)}$.

The statistical content of the paper is an oracle inequality for this estimator. Informally, under standard sub-Gaussian assumptions on the design and noise, we prove that
\begin{align*}
        \|\widehat\beta-\beta\|_2^2
    \lesssim
    \min_{s : s \in [p]}
    \bigg\{
        \inf_{\nu\in K^{(s)}}
        \|\nu - \beta\|_2^2
        +
        \lambda
        \sigma \bigg(\frac{w(K^{(s)}) \sqrt{\log(e + s)}}{\sqrt{N}} + \frac{w(K^{(s)}) \log(e + s)}{N}\bigg)\\
        +
        \lambda  \frac{w^2(K^{(s)}) \log(e + s)}{N}
    \bigg\},
\end{align*}
up to constants depending only on the design distribution. Therefore, if $\beta\in K$ is $s$-sparse, then
\begin{align}\label{intro:bound}
    \|\widehat\beta-\beta\|_2^2
    \lesssim
    \lambda
        \sigma \bigg(\frac{w(K^{(s)}) \sqrt{\log(e + s)}}{\sqrt{N}} + \frac{w(K^{(s)}) \log(e + s)}{N}\bigg) + \lambda  \frac{w^2(K^{(s)}) \log(e + s)}{N}.
\end{align}

Let $K\subseteq R B_2^p$ for some known radius $R$. The radius $R$ is a deterministic and known geometric quantity associated with the constraint $K$. As we argue in the appendix $w(K^{(s)}) \lesssim d_s \sqrt{s \log (ep/s)}$ where $d_s = \operatorname{diam}_2(K^{(s)}) \leq \operatorname{diam}_2(K) \leq 2R$. Hence the bound on the estimation error squared is meaningful so long as 
\begin{align*}
    R^2 (\sigma^2 \vee 1) \log(e + s)s\log(ep/s) \ll N,
\end{align*}
provided that $\lambda$ is chosen of constant order. 
This condition is meaningful even when $p\gg N$, provided the target vector is sufficiently sparse and the radius and sub-Gaussian parameter of the noise are not too large.

We emphasize that the purpose of this paper is not to prove minimax optimality over every symmetric norm ball. Nevertheless, we show that if it is known that $\beta\in K^{(s)}$, and $N$ is sufficiently large, then our estimator is minimax optimal up to
logarithmic factors.

A key geometric ingredient is the following support-function identity. If $T_s(x)$ denotes the indices of the $s$ largest coordinates of $x$ in absolute value, then
\begin{align*}
    h_{K^{(s)}}(x)
    =
    h_K(x_{T_s(x)}),
\end{align*}
where $h_{K^{(s)}}$ and $h_K$ are the support functions of $K^{(s)}$ and $K$ respectively. This identity uses only sign-symmetry and permutation invariance. It shows that the polar geometry of $K^{(s)}$ can be accessed through the original norm $K$ after keeping only the largest $s$ coordinates. As a consequence, the sparse convexification $K^{(s)}$ is not merely a statistical device; it also has a computationally useful description in terms of the Minkowski functional of $K$.

The framework also gives a new consequence for the constrained Lasso. When $K=R B_1^p$, the sparse convexification hierarchy collapses, since
\begin{align*}
    K^{(s)} = R B_1^p
    \qquad\text{for every } s\ge 1.
\end{align*}
Thus the estimator coincides with the constrained Lasso. Nevertheless, our analysis yields a consistency result for the entire $\ell_1$-ball. In particular, if $\beta\in R B_1^p$, using the oracle inequality with $s = 1$ (as $K^{(s)} = R B_1^p$), then the constrained Lasso satisfies a bound of the form
\begin{align*}
    \|\widehat\beta-\beta\|_2^2
    \lesssim \sigma R \sqrt{\frac{\log (ep)}{N}} + R^2 \frac{\log (ep)}{N},
\end{align*}
under the same design assumptions. To the best of our knowledge, this particular consistency statement for the constrained Lasso has not been explicitly emphasized in the high-dimensional regression literature. The rate agrees with the behavior predicted by the general convex-constrained least-squares theory of \cite{chatterjee2014new}, in particular with the phenomenon described in his Theorem~2.1\footnote{We do remark however that Theorem~2.1 is only valid in the low-dimensional scenario, and concerns the in-sample prediction loss rather than the estimation error. In the low dimensional scenario with $\lambda_{\min}(X\T X) \geq c_0$ as assumed in Theorem~2.1 of \cite{chatterjee2014new} the two losses are equivalent.}. It also resembles a slow rate bound for the Lasso \cite[see Theorem 7.20][e.g.]{wainwright2019high}, although in contrast to the classical slow rate bound this is a guarantee on the estimation error of the constrained Lasso rather than a bound on the prediction loss of the regularized Lasso. Finally, we mention a Lasso bound from \cite{raskutti2011minimax} which is very relevant; specifically under fixed design satisfying certain conditions Theorem 2 (a) of \cite{raskutti2011minimax} shows that $\|\hat \beta - \beta\|_2^2 \lesssim R\sigma \sqrt{\frac{\log (ep)}{N}}$ with high probability. In contrast we work in the random design setting and our bound is in expectation. Moreover, \cite{raskutti2011minimax} show this is the minimax optimal rate over the $\ell_1$ ball. 

The rest of the paper is organized as follows. Section \ref{defs:notation:sec} introduces several definitions and commonly used notation. Section~\ref{highdim:regression:section} zooms in on the regression model and the sparse convexification hierarchy. It also contains our main result and its proof. Section \ref{minimax:sec} shows the minimax optimality of our algorithm under some assumptions. Section~\ref{discussion:section} concludes with limitations and open problems.

\subsection{Definitions and Notation}\label{defs:notation:sec}

We use the convenient notation $[n] = \{1,\ldots, n\}$. For a vector $x\in \mathbb R^p$ and an index set $T\subseteq [p]$, we write $x_T$ for the restriction of $x$ to the coordinates in $T$. When convenient, we identify this restriction with its natural embedding in $\mathbb R^p$, obtained by setting all coordinates outside $T$ equal to zero. The intended meaning will always be clear from context. We denote with $\|\cdot\|_{\operatorname{op}}$ the operator norm of a matrix. With a slight abuse of notation, we denote with $\|\nu\|_0$ the cardinality of the set of non-zero entries of the vector $\nu$. We will occasionally use $\operatorname{dist}(x, C)$ to denote the smallest Euclidean distance from $x$ to a closed convex set $C$. Let $\mathbb S^{p-1}$ denote the unit (Euclidean) sphere on $\RR^p$, while $B_2^p$ be the unit Euclidean ball. We start with a classical definition of sub-Gaussian variables.

\begin{definition}[Orlicz norm and sub-Gaussian variables] Let $X$ be a real valued random variable. The Orlicz $2$-norm of $X$ is defined as
\begin{align*}\|X\|_{\psi_2}
:=
\inf\left\{
c>0:\ \mathbb{E}\exp\left(\frac{X^2}{c^2}\right)\le e
\right\}.
\end{align*}

If in addition $X$ is mean zero, and $\|X\|_{\psi_2} < \infty$ it follows that
\begin{align*}
    \EE \exp(\delta \cdot X) \leq \exp(\delta^2 c_0^2), \mbox{ for all } \delta \in \RR,
\end{align*}
 for some absolute constant $c_0 \asymp c$ (see \cite{vershynin2010introduction}). In that case we say that $X$ is a sub-Gaussian variable with parameter $c_0$.
\end{definition}

We now define the polar body of a convex body $K$.

\begin{definition}[Polar body]
Let $K\subset \mathbb{R}^p$ be a convex body containing the origin. The
polar body of $K$ is defined by
\begin{align*}
    K^\circ
    :=
    \left\{
        y\in \mathbb{R}^p:
        \langle x,y\rangle \le 1
        \text{ for all } x\in K
    \right\}.
\end{align*}
Equivalently,
\begin{align*}
    K^\circ
    =
    \left\{
        y\in \mathbb{R}^p:
        h_K(y)\le 1
    \right\},
\end{align*}
where
\begin{align*}
    h_K(y):=\sup_{x\in K}\langle x,y\rangle
\end{align*}
is the support function of $K$.
\end{definition}

Next we define the notion of a weak membership oracle of a closed convex set.

\begin{definition}[Weak membership oracle]
Let $C\subseteq\mathbb R^p$ be closed and convex. A weak membership oracle for $C$
is an oracle which, on input $(x,\varepsilon)\in\mathbb R^p\times(0,\infty)$,
returns $\mathrm{YES}$ if $x\in C$, returns $\mathrm{NO}$ if
\begin{align*}
    \operatorname{dist}(x,C)>\varepsilon,
\end{align*}
and may return either answer otherwise. Equivalently, the oracle is allowed
to be ambiguous only for points outside $C$ whose Euclidean distance to $C$
is at most $\varepsilon$.
\end{definition}

Finally we give the definition of dyadic entropy numbers, which is crucial in establishing the minimaxity of our procedure.

\begin{definition}[Dyadic Entropy Numbers of a Convex Body] The $k$-th dyadic entropy number of $K$ is given by
\begin{align*}
e_k(K)
:=
\inf\left\{
\varepsilon>0:
N(\varepsilon, K)\le 2^{k-1}
\right\},
\qquad k\ge 1,
\end{align*}
where \(N(\varepsilon, K)\) denotes the smallest number of Euclidean balls of radius \(\varepsilon\) needed to cover \(K\).
\end{definition}

We proceed with our main section below.

\section{High-Dimensional Regression}\label{highdim:regression:section}

In this section we discuss the genuinely high-dimensional case, where $p$ may be much larger than $N$.  We focus on the regression model
\begin{align*}
    Y=X\beta+\xi,
    \qquad
    \beta\in K\subseteq \RR^p,
\end{align*}
where $K$ is the unit ball of a symmetric norm. Suppose that the entries $\xi_1,\dots,\xi_N$
are i.i.d., mean-zero, sub-Gaussian random variables with sub-Gaussian parameter at most $\sigma$. 

Let $\beta \in K \subseteq \RR^p$ where $K$ has a symmetric Minkowski functional as detailed in \eqref{gauge} and \eqref{symmetric:norm}. We suppose that the covariates (i.e. the rows of the matrix $X$): $X_i$ are independent from the noise $\xi_i$.

Suppose further that the covariates $X_i$ are centered\footnote{If the predictors $X_i$ are not centered one can consider $(Y_{2i} - Y_{2i-1}, X_{2i} - X_{2i-1})_{i \in [\lfloor N/2\rfloor]}$ to center the predictors while leaving $\beta$ unchanged. We note that this operation prevents the model from having an intercept.} sub-Gaussian vectors with a well conditioned covariance matrix $\EE X_i X_i\T = \Sigma$, i.e., $\lambda_- \leq \lambda_{\min}(\Sigma) \leq \lambda_{\max}(\Sigma) \leq \lambda_+$ for some $\lambda_-, \lambda_+ > 0$, and sub-Gaussian parameter of constant order, i.e., $\sup_{v \in \mathbb S^{p-1}} \EE \exp(\lambda v\T X_i) \leq \exp(\lambda^2 \zeta^2/2)$ for $\zeta = O(1)$. 
Let $R$ be a number such that $K \subseteq R B^p_2$. (as discussed in Lemma 2.5 \cite{neykov2026fast} this number can always be computed for a symmetric set $K$; even more, if one applies Lemma \ref{lem:sparse-diameter-fundamental} with $s = p$ one can approximate the best possible $R$ value within a $\sqrt{\log p}$ factor).

For $1\le s\le p$, define
\begin{align*}
    S_s:=\{v\in K:\|v\|_0\le s\},
    \qquad
    K^{(s)}:=\operatorname{conv}(S_s),
\end{align*}
We write
\begin{align*}
    \phi(s):=s\log(ep/s),\qquad 1\le s\le p.
\end{align*}
Throughout this section we use the notation
\begin{align*}
    d_s:=\diam(K^{(s)})=\sup_{u,v\in K^{(s)}}\|u-v\|_2.
\end{align*}
Since $K^{(s)}$ is centrally symmetric, its Euclidean radius satisfies
\begin{align*}
    \operatorname{rad}(K^{(s)}):=\sup_{v\in K^{(s)}}\|v\|_2=d_s/2 \leq R.
\end{align*}
The estimator considered in this section is

\begin{align}\label{hd:estimator}
    (\widehat s,\widehat\beta)
    \in
    \argmin_{\substack{1\le s\le p\\ \nu\in K^{(s)}}}
    \bigg\{
        \frac1N\|Y-X\nu\|_2^2
        &+
        \lambda
        \sigma \bigg(\frac{w(K^{(s)}) \sqrt{\log(e + s)}}{\sqrt{N}} + \frac{w(K^{(s)}) \log(e + s)}{N}\bigg) \nonumber\\
        &+ \lambda  \frac{w^2(K^{(s)}) \log(e + s)}{N}
    \bigg\}.
\end{align}
Equivalently, for each $s$ one computes a constrained least-squares estimator over the convex set $K^{(s)}$ and then selects $s$ by the complexity penalty above. We note in passing, that the above procedure does not need to be implemented for all $s$ --- a dyadic grid on $s$ suffices to achieve the same rate. Obviously this leads to computational gains. However, to keep the exposition simple we do not pursue this refinement here. 

Before we state and prove our main theorem, we argue that one can implement $\widehat \beta$ in polynomial time.

\subsection{On computing our estimator}

We begin by stating that any symmetric convex body $K \in \RR^p$ with oracle access to its Minkowski functional is well-balanced, in the sense that one can compute $r, R > 0$ such that $rB_2 \subseteq K \subseteq RB_2$ and $R/r \leq p$ \cite[see Lemma 2.5][for instance]{neykov2026fast}. This property is very useful for computational considerations as will become apparent soon.

We first record the oracle reduction for $K^{(s)}$.  For a vector $z\in\RR^p$, let $T_s(z)$ denote the set of indices of the $s$ largest coordinates of $z$ in absolute value, with deterministic tie-breaking.

\begin{lemma}[Support function and polar of $K^{(s)}$]\label{lem:Ks-polar}
Let $K\subseteq\RR^p$ be centrally symmetric, convex, sign-invariant, and permutation-invariant.  Then, for every $z\in\RR^p$,
\begin{align*}
    h_{K^{(s)}}(z)
    :=
    \sup_{v\in K^{(s)}}\langle z,v\rangle
    =
    \max_{|T|\le s}\|z_T\|_{K^\circ}
    =
    \|z_{T_s(z)}\|_{K^\circ}.
\end{align*}
Consequently,
\begin{align*}
    (K^{(s)})^\circ
    =
    \left\{z\in\RR^p:\|z_{T_s(z)}\|_{K^\circ}\le 1\right\}.
\end{align*}
If, in addition,
\begin{align*}
    rB_2^p\subseteq K\subseteq RB_2^p,
\end{align*}
then
\begin{align*}
    \frac1R B_2^p
    \subseteq
    (K^{(s)})^\circ
    \subseteq
    \frac1r\sqrt{\frac{p}{s}}B_2^p.
\end{align*}
Thus $(K^{(s)})^\circ$ is well-balanced, with Euclidean aspect ratio at most $(R/r)\sqrt{p/s}$.
\end{lemma}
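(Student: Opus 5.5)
The plan is to compute the support function directly from the definition of $K^{(s)}$ as a convex hull. A linear functional attains the same supremum over a set and over its convex hull, so
\begin{align*}
h_{K^{(s)}}(z)=\sup_{v\in S_s}\langle z,v\rangle=\max_{|T|\le s}\sup_{v\in K,\ \operatorname{supp}(v)\subseteq T}\langle z_T,v\rangle .
\end{align*}
For fixed $T$, the inner supremum is at most $h_K(z_T)=\|z_T\|_{K^\circ}$ because the constraint set is a subset of $K$. For the reverse inequality, take any $v\in K$ and let $D_T$ be the coordinate projection onto $T$. By sign-invariance, $D_Tv=\tfrac12(v+\varepsilon v)$, where $\varepsilon$ flips the signs of the coordinates outside $T$. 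This is a midpoint of two points of $K$, so $D_Tv\in K$ by convexity. Moreover $\langle z_T,v\rangle=\langle z_T,D_Tv\rangle$. Hence the inner supremum equals $\|z_T\|_{K^\circ}$, which gives the first equality. Here I use that $h_K$ is the gauge of $K^\circ$ for a closed convex body containing $0$.

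Next I show that the maximum over $|T|\le s$ is attained at $T_s(z)$. Since $K^\circ$ inherits sign- and permutation-invariance from $K$, the function $\|z_T\|_{K^\circ}$ depends only on the multiset $\{|z_j|:j\in T\}$. I would also prove a coordinatewise monotonicity fact: if $|x_j|\le|y_j|$ for all $j$, then $\|x\|_{K^\circ}\le\|y\|_{K^\circ}$. To see this, write each $x_j=t_jy_j$ with $t_j\in[-1,1]$. Then $x$ is a convex combination of sign-flipped copies of $y$ (the map $D\mapsto Dy$ over diagonal $\pm1$ matrices has the cube as convex hull), so convexity of the norm gives the inequality.

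With monotonicity in hand, let $|T|\le s$ be arbitrary. I construct an injection from $T$ into $T_s(z)$ that matches the $k$-th largest of $|z_T|$ with the $k$-th largest of $|z|$. After a permutation, $z_T$ is then dominated coordinatewise in absolute value by $z_{T_s(z)}$, so $\|z_T\|_{K^\circ}\le\|z_{T_s(z)}\|_{K^\circ}$. The description of the polar, $(K^{(s)})^\circ=\{h_{K^{(s)}}\le1\}$, is then immediate. Arranging this injection cleanly, including ties and the case $|T|<s$, is the only mildly fiddly step.

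For the inclusions, $S_s\subseteq K\subseteq RB_2$ gives $K^{(s)}\subseteq RB_2$, and polarity yields $\tfrac1RB_2\subseteq(K^{(s)})^\circ$. For the other inclusion, $K\supseteq rB_2$ gives $\|x\|_{K^\circ}\ge r\|x\|_2$. So $z\in(K^{(s)})^\circ$ implies $\|z_{T_s(z)}\|_2\le 1/r$. Because $T_s(z)$ collects the largest entries, the average square over it dominates the average over all coordinates: $\|z_{T_s(z)}\|_2^2\ge \tfrac sp\|z\|_2^2$. Therefore $\|z\|_2\le \tfrac1r\sqrt{p/s}$, and the aspect-ratio statement follows by dividing the two radii.
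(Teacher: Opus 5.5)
Your proposal is correct and follows essentially the same route as the paper. You reduce the support function to $S_s$, use that coordinate projections preserve membership in $K$, maximize the symmetric norm $\|\cdot\|_{K^\circ}$ at $T_s(z)$, and get the outer inclusion from $r\|w\|_2\le\|w\|_{K^\circ}$ together with $\|z_{T_s(z)}\|_2^2\ge \tfrac{s}{p}\|z\|_2^2$. The only differences are that you supply details the paper merely asserts, namely the midpoint-of-sign-flips argument for projections and the monotonicity-plus-matching argument for why $T_s(z)$ is optimal. You also obtain the inner inclusion directly from $K^{(s)}\subseteq K\subseteq RB_2^p$, rather than via the support-function bound.
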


\begin{proof}
Since support functions are unchanged by taking convex hulls,
\begin{align*}
    h_{K^{(s)}}(z)
    =
    \sup_{v\in S_s}\langle z,v\rangle
    =
    \max_{|T|\le s}\ \sup_{\substack{v\in K\\ \operatorname{supp}(v)\subseteq T}}
    \langle z,v\rangle .
\end{align*}
Because $K$ is sign-symmetric and permutation-invariant convex body, coordinate projections preserve membership in $K$.  Hence, for each fixed $T$,
\begin{align*}
    \sup_{\substack{v\in K\\ \operatorname{supp}(v)\subseteq T}}
    \langle z,v\rangle
    =
    \sup_{v\in K}\langle z_T,v\rangle
    =
    h_K(z_T)
    =
    \|z_T\|_{K^\circ}.
\end{align*}
Therefore
\begin{align*}
    h_{K^{(s)}}(z)=\max_{|T|\le s}\|z_T\|_{K^\circ}.
\end{align*}
The norm $\|\cdot\|_{K^\circ}$ is also sign-invariant and permutation-invariant, so the maximum over all $T$ with $|T|\le s$ is attained by taking the indices of the $s$ largest coordinates of $z$ in absolute value.  This proves the support-function identity and the formula for the polar.

It remains to check the Euclidean sandwich.  Polarity of
$rB_2^p\subseteq K\subseteq RB_2^p$ gives
\begin{align*}
    \frac1R B_2^p\subseteq K^\circ\subseteq \frac1r B_2^p,
\end{align*}
or equivalently
\begin{align*}
    r\|w\|_2\le \|w\|_{K^\circ}\le R\|w\|_2
    \qquad \text{for all }w\in\RR^p.
\end{align*}
If $\|z\|_2\le 1/R$, then
\begin{align*}
    h_{K^{(s)}}(z)=\|z_{T_s(z)}\|_{K^\circ}
    \le R\|z_{T_s(z)}\|_2
    \le R\|z\|_2
    \le 1,
\end{align*}
so $z\in (K^{(s)})^\circ$.  Conversely, if $z\in (K^{(s)})^\circ$, then
\begin{align*}
    1\ge h_{K^{(s)}}(z)=\|z_{T_s(z)}\|_{K^\circ}
    \ge r\|z_{T_s(z)}\|_2.
\end{align*}
Since $T_s(z)$ contains the $s$ largest coordinates of $z$,
\begin{align*}
    \|z_{T_s(z)}\|_2^2\ge \frac{s}{p}\|z\|_2^2.
\end{align*}
Hence $\|z\|_2\le r^{-1}\sqrt{p/s}$, proving the upper inclusion.
\end{proof}

This identity shows that an exact membership oracle for $K^\circ$ immediately yields an exact membership oracle for $(K^{(s)})^\circ$. In our setting, however, we start only with a membership oracle for $K$ (which is easy to obtain since we are allowed to evaluate the gauge of $K$), which, as shown below, yields only a weak membership oracle for $K^\circ$. We therefore need the following stability lemma, which shows that weak membership also passes from $K^\circ$ to $(K^{(s)})^\circ$, up to a polynomial loss in precision. The proof is deferred to the appendix.

\begin{lemma}[Weak membership for $(K^{(s)})^\circ$]
\label{lem:weak-membership-Ks-polar}
Assume that $K\subseteq \mathbb R^p$ satisfies
\begin{align*}
    rB_2^p\subseteq K\subseteq RB_2^p
\end{align*}
and is convex, sign-symmetric, and permutation-invariant.

Suppose that we have a weak membership oracle for $K^\circ$. Then we have a
weak membership oracle for $(K^{(s)})^\circ$.

More precisely, on input $z\in\mathbb R^p$ and $\varepsilon>0$, query the weak membership oracle for $K^\circ$ with input $z_{T_s(z)}$ and tolerance  $\delta
    =
    \frac{r}{R}\sqrt{\frac{s}{p}}\,\varepsilon$. This gives a valid weak membership oracle for $(K^{(s)})^\circ$.
\end{lemma}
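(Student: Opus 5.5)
We have to check two cases, one for each answer the procedure can give. Write $P_s(z)=z_{T_s(z)}$. By Lemma~\ref{lem:Ks-polar} we know $(K^{(s)})^\circ=\{z:\ P_s(z)\in K^\circ\}$.

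\emph{YES on members.} If $z\in (K^{(s)})^\circ$, then $P_s(z)\in K^\circ$. The weak oracle for $K^\circ$ must therefore return YES, and so our procedure answers YES. This direction is immediate.

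\emph{NO on far points.} This is the substantive part, and I would prove its contrapositive. Suppose the procedure returns YES. Then the $K^\circ$ oracle said YES on $P_s(z)$, which forces $\operatorname{dist}(P_s(z),K^\circ)\le\delta$. The goal is to deduce $\operatorname{dist}(z,(K^{(s)})^\circ)\le\varepsilon$.

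\textbf{Step 1: bound the $K^\circ$-norm of $P_s(z)$.} Pick $y\in K^\circ$ with $\|P_s(z)-y\|_2\le\delta$. By the triangle inequality and $\|\cdot\|_{K^\circ}\le R\|\cdot\|_2$,
\[
h_{K^{(s)}}(z)=\|P_s(z)\|_{K^\circ}\le 1+R\delta .
\]

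\textbf{Step 2: scale back into the polar.} Let $t=1/(1+R\delta)$. Since $h_{K^{(s)}}$ is positively homogeneous, $tz\in (K^{(s)})^\circ$. Hence
\[
\operatorname{dist}(z,(K^{(s)})^\circ)\le (1-t)\|z\|_2\le R\delta\,\|z\|_2 .
\]

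\textbf{Step 3: bound $\|z\|_2$ using the polar sandwich.} From Lemma~\ref{lem:Ks-polar} we have the pointwise bounds
\[
\|P_s(z)\|_{K^\circ}\ge r\|P_s(z)\|_2\ge r\sqrt{s/p}\,\|z\|_2 .
\]
Combining with Step 1 gives $\|z\|_2\le (1+R\delta)\, r^{-1}\sqrt{p/s}$. Plugging into Step 2 yields
\[
\operatorname{dist}(z,(K^{(s)})^\circ)\le (1+R\delta)\,\frac{R}{r}\sqrt{\frac{p}{s}}\,\delta=(1+R\delta)\,\varepsilon .
\]

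\textbf{The obstacle.} The leftover factor $1+R\delta$ is where the argument is not yet tight. I would remove it by sharpening Step 2: the distance from $z$ to $(K^{(s)})^\circ$ is bounded by the distance from $z$ to $tz$, so only $(1-t)\|z\|_2=\frac{R\delta}{1+R\delta}\|z\|_2$ matters. With this exact value of $1-t$, the factor $1+R\delta$ cancels against the bound on $\|z\|_2$ from Step 3, leaving exactly $\frac{R}{r}\sqrt{p/s}\,\delta=\varepsilon$.

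Thus whenever $\operatorname{dist}(z,(K^{(s)})^\circ)>\varepsilon$, the oracle must say NO. Together with the first case, this shows the procedure is a valid weak membership oracle. It uses a single query, computing $T_s(z)$ takes polynomial time, and the tolerance is only polynomially smaller.
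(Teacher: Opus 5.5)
Your proposal is correct and follows essentially the same route as the paper. You bound $\|z_{T_s(z)}\|_{K^\circ}\le \alpha:=1+R\delta$, rescale $z$ by $1/\alpha$ into $(K^{(s)})^\circ$, and use the outer radius $r^{-1}\sqrt{p/s}$ to get $(1-1/\alpha)\|z\|_2\le(\alpha-1)r^{-1}\sqrt{p/s}=\varepsilon$. The ``obstacle'' is not a real gap: use the exact value $1-t=R\delta/(1+R\delta)$ from the start, as the paper does.
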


Lemma~\ref{lem:weak-membership-Ks-polar} shows that a weak membership
oracle for $K^\circ$ still gives a weak membership oracle for
$(K^{(s)})^\circ$, with only the polynomial loss in precision
\begin{align*}
    \delta
    =
    \frac{r}{R}\sqrt{\frac{s}{p}}\,\varepsilon .
\end{align*}
On the other hand, applying Lemma~\ref{lem:polar-weak-membership}, given below, with
$C=K^\circ$ shows that a weak membership oracle for $K$ gives a weak
membership oracle for $K^\circ$, since
\begin{align*}
    \frac1R B_2^p\subseteq K^\circ\subseteq \frac1r B_2^p .
\end{align*}
Combining these two reductions, a weak membership oracle for $K$ gives a
weak membership oracle for $(K^{(s)})^\circ$. Finally, since
$(K^{(s)})^\circ$ is well-balanced by Lemma~\ref{lem:Ks-polar}, another
application of Lemma~\ref{lem:polar-weak-membership} below, now with
$C=K^{(s)}$ (which is also well balanced), gives a weak membership oracle for $K^{(s)}$.

\begin{lemma}[From polar weak membership to primal weak membership]
\label{lem:polar-weak-membership}
Let $C\subset \mathbb R^p$ be a closed, convex, centrally symmetric body with
$0\in \operatorname{int}(C)$. Assume that
\begin{align*}
    r B_2^p \subseteq C \subseteq R B_2^p
\end{align*}
for known $0<r\le R<\infty$. Suppose that we have a weak membership oracle
for the polar body
\begin{align*}
    C^\circ
    :=
    \{y\in\mathbb R^p:\langle y,x\rangle\le 1
    \text{ for all }x\in C\}.
\end{align*}
That is, given $y\in\mathbb R^p$ and $\delta>0$, the oracle returns
$\textnormal{YES}$ if $y\in C^\circ$, returns $\textnormal{NO}$ if
$\operatorname{dist}(y,C^\circ)>\delta$, and may return arbitrary output
otherwise.

Then there is a polynomial-time oracle procedure giving a weak membership
oracle for $C$. More precisely, given $x\in\mathbb R^p$ and
$\varepsilon>0$, the procedure returns
\begin{align*}
    \textnormal{YES} \quad \text{if } x\in C,
\end{align*}
and
\begin{align*}
    \textnormal{NO} \quad \text{if } \operatorname{dist}(x,C)>\varepsilon,
\end{align*}
with arbitrary output on the boundary band
\begin{align*}
    \{x:\operatorname{dist}(x,C)\le \varepsilon\}.
\end{align*}
\end{lemma}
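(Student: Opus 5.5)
The plan is to invoke the classical Grötschel--Lovász--Schrijver (GLS) machinery for well-bounded convex bodies, applied to $C^\circ$, and then use a separation argument to turn the output into a weak membership answer for $C$.

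\textbf{Step 1: polar geometry.} Polarity turns $rB_2^p\subseteq C\subseteq RB_2^p$ into
\begin{align*}
    \tfrac1R B_2^p\subseteq C^\circ\subseteq \tfrac1r B_2^p .
\end{align*}
So $C^\circ$ is a well-bounded, centred convex body with known inner and outer radii.

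\textbf{Step 2: weak optimization over $C^\circ$.} By the GLS theorem (weak membership plus known centred inner and outer balls gives weak optimization, via the shallow-cut ellipsoid method), we get a polynomial-time procedure for weak linear optimization over $C^\circ$. Given $x$ and a precision $\eta>0$, it returns a point $y^\ast$ with
\begin{align*}
    \operatorname{dist}(y^\ast,C^\circ)\le\eta,
    \qquad
    \langle x,y^\ast\rangle\ge h_{C^\circ}(x)-\eta .
\end{align*}
The number of oracle calls and the arithmetic cost are polynomial in $p$, $\log(R/r)$, $\log(1/\eta)$ and the encoding length of $x$. Since $C$ is closed, convex and contains the origin, $C^{\circ\circ}=C$, so $h_{C^\circ}(x)=\|x\|_C$. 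Thus the procedure approximates the gauge of $C$ to within additive error $\eta$, up to also controlling the approximate feasibility of $y^\ast$.

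\textbf{Step 3: decision rule and its correctness.} Output YES if $\langle x,y^\ast\rangle\le 1+\eta'$, and NO otherwise, where $\eta'=\eta+\eta\|x\|_2$ accounts for the slack in $y^\ast$. We may first answer NO whenever $\|x\|_2>R+\varepsilon$, since then $\operatorname{dist}(x,C)>\varepsilon$. After this check $\|x\|_2$ is bounded, so $\eta'$ is of order $\eta(1+R+\varepsilon)$.

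\emph{Soundness for $x\in C$.} Write $y^\ast=y_0+e$ with $y_0\in C^\circ$ and $\|e\|_2\le\eta$. Then
\begin{align*}
    \langle x,y^\ast\rangle\le \langle x,y_0\rangle+\|x\|_2\eta\le 1+\eta\|x\|_2\le 1+\eta',
\end{align*}
so we answer YES.

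\emph{Completeness for $\operatorname{dist}(x,C)>\varepsilon$.} Since $rB_2^p\subseteq C$, the gauge grows at least linearly with distance. If $\|x\|_C\le 1+t$, then $x/(1+t)\in C$ and $\operatorname{dist}(x,C)\le \frac{t}{1+t}\|x\|_2\le t\|x\|_2$. Hence $\operatorname{dist}(x,C)>\varepsilon$ forces
\begin{align*}
    \|x\|_C>1+\frac{\varepsilon}{\|x\|_2}\ge 1+\frac{\varepsilon}{R+\varepsilon}.
\end{align*}
Then $\langle x,y^\ast\rangle\ge \|x\|_C-\eta>1+\eta'$, and we answer NO, as long as $\eta+\eta'<\varepsilon/(R+\varepsilon)$. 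Choosing $\eta$ of order $\varepsilon/(R+\varepsilon)^2$ suffices, and $\log(1/\eta)$ is polynomial in the input size, so the whole procedure runs in polynomial time.

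\textbf{Main obstacle.} The hard part is Step 2: certifying that the GLS weak-membership-to-weak-optimization theorem applies with exactly these oracle semantics, and that the near-feasibility of $y^\ast$ is quantified in Euclidean distance. I would cite GLS (Theorem 4.3.13 and Corollary 4.3.12 in \emph{Geometric Algorithms and Combinatorial Optimization}) directly, noting that the centred inner ball $\frac1R B_2^p$ supplies the required a priori information. An alternative is to derive weak separation for $C^\circ$ and then use the weak-separation/weak-violation duality for polars (GLS Theorem 4.4.x). That duality yields weak separation for $C$, which immediately implies weak membership for $C$.
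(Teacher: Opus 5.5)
Your proposal is correct and follows essentially the paper's route: GLS weak optimization over the well-bounded polar $C^\circ$, the bipolar identity $h_{C^\circ}(x)=\|x\|_C$, and a threshold test whose precision is tuned to a quantitative gap for points with $\operatorname{dist}(x,C)>\varepsilon$. There are two differences, plus one small constant fix. First, you derive the gap by radial scaling, so it depends on $\|x\|_2$ and needs your pre-check $\|x\|_2>R+\varepsilon$; the paper instead projects $x$ onto $C$ to get $y_0=u/h_C(u)\in C^\circ$ with $\langle y_0,x\rangle>1+\varepsilon/R$, a gap independent of $\|x\|_2$. Second, you explicitly track the near-feasibility of the GLS output, which the paper absorbs into an asserted two-sided bound $|\widehat h(x)-h_{C^\circ}(x)|\le\eta$. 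The fix: take $\eta\asymp\varepsilon/(1+R+\varepsilon)^2$ rather than $\varepsilon/(R+\varepsilon)^2$, so the constants also work when $R+\varepsilon$ is small.
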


Thus by Theorem 2.5.9 \cite{dadush2012integer} the inner optimization in \eqref{hd:estimator} can be approximated in time polynomial in $p$, $\log(R/r)$, and the requested accuracy. This is because the function $v \mapsto \|Y - X v\|_2$ is Lipschitz with high probability. For simplicity we will assume henceforth that we can optimize the problem exactly, but it is easy to see the same results continue to hold if we set the precision to a sufficiently small number. 

Finally we state two lemmas, establishing that the Gaussian width of the set $K^{(s)}$ can be computed up to constant factors. This will be very useful when we formally state the algorithm and penalty in the next section. The proofs are deferred to the appendix.

\begin{lemma}[Order-statistic bound for $w(K^{(s)})$]
\label{lem:width-order-stat}
Let $K\subseteq \mathbb R^p$ be centrally symmetric, convex,
sign-invariant and permutation-invariant.  Let $g\sim N(0,I_p)$, and let
$g_1^*\ge\cdots\ge g_p^*$ denote the decreasing rearrangement of
$(|g_1|,\ldots,|g_p|)$.  Define
\begin{align*}
    \gamma^{(s)}
    :=
    \left(
    \sqrt{\log(ep)},\sqrt{\log(ep/2)},\ldots,
    \sqrt{\log(ep/s)},0,\ldots,0
    \right).
\end{align*}
Then
\begin{align*}
    w(K^{(s)})
    =
    \mathbb E\left\|
    (g_1^*,\ldots,g_s^*,0,\ldots,0)
    \right\|_{K^\circ}
\end{align*}
and
\begin{align*}
    w(K^{(s)})
    \le
    C\|\gamma^{(s)}\|_{K^\circ}
\end{align*}
\end{lemma}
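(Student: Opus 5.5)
The equality follows from Lemma~\ref{lem:Ks-polar}, which gives $h_{K^{(s)}}(g)=\|g_{T_s(g)}\|_{K^\circ}$. Since $\|\cdot\|_{K^\circ}$ is sign- and permutation-invariant, $\|g_{T_s(g)}\|_{K^\circ}=\|(g_1^*,\ldots,g_s^*,0,\ldots,0)\|_{K^\circ}$. Taking expectations, with $w(K^{(s)})=\mathbb E h_{K^{(s)}}(g)$, yields the identity.

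For the inequality, the plan is to dominate the random vector $g^{*,s}:=(g_1^*,\ldots,g_s^*,0,\ldots,0)$ by a constant multiple of $\gamma^{(s)}$. We use two facts.

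\begin{enumerate}
\item[(a)] A symmetric norm is monotone in absolute values coordinatewise. This is because $x\mapsto(\ldots,tx_j,\ldots)$ for $t\in[-1,1]$ is a convex combination of sign flips.
\item[(b)] The norm is additive-subadditive. Concretely, $\|a\|_{K^\circ}\le \|a-b\|_{K^\circ}+\|b\|_{K^\circ}$, and $\|e_1\|_{K^\circ}\le \|\gamma^{(s)}\|_{K^\circ}/\sqrt{\log(ep/s)}$ by (a).
\end{enumerate}

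Write $g_k^*\le C_0\sqrt{\log(ep/k)}+ (g_k^*-C_0\sqrt{\log(ep/k)})_+$ for $k\le s$. Then (a) and the triangle inequality give
\begin{align*}
 \|g^{*,s}\|_{K^\circ}\le C_0\|\gamma^{(s)}\|_{K^\circ}+\|\Delta\|_{K^\circ},\qquad \Delta_k:=(g_k^*-C_0\sqrt{\log(ep/k)})_+ .
\end{align*}

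The main obstacle is controlling $\mathbb E\|\Delta\|_{K^\circ}$ by a constant times $\|\gamma^{(s)}\|_{K^\circ}$.

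First I would try the crude bound $\|\Delta\|_{K^\circ}\le \|\Delta\|_1 \|e_1\|_{K^\circ}$. This needs $\mathbb E\sum_{k\le s}\Delta_k$ to be compared with $\sqrt{\log(ep/s)}$, which may fail for large $s$.

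A better route is to compare with the $\ell_1$-type structure of $\gamma^{(s)}$ itself. Because $\|\cdot\|_{K^\circ}$ is symmetric, $\|x\|_{K^\circ}\le \max_j \frac{\sum_{k\le j}x_k^*}{\sum_{k\le j}\gamma^{(s)}_k}\,\|\gamma^{(s)}\|_{K^\circ}$. This holds since a vector majorized (in the weak sense, on partial sums of decreasing rearrangements) by $\gamma^{(s)}$ lies in the convex hull of its signed permutations. Hence it suffices to show
\begin{align*}
\mathbb E\max_{j\le s}\frac{\sum_{k\le j}g_k^*}{\sum_{k\le j}\sqrt{\log(ep/k)}}\le C .
\end{align*}
Then
\begin{align*}
\|g^{*,s}\|_{K^\circ}\le M\|\gamma^{(s)}\|_{K^\circ}\quad\text{with}\quad M:=\max_j\frac{\sum_{k\le j}g_k^*}{\sum_{k\le j}\gamma^{(s)}_k},
\end{align*}
and the whole problem reduces to $\mathbb E M\le C$. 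This bypasses the $\Delta$ split entirely.

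To bound $\mathbb E M$, note that $\sum_{k\le j}\sqrt{\log(ep/k)}\asymp j\sqrt{\log(ep/j)}$. Also, $\sum_{k\le j}g_k^*=\max_{|T|=j}\sum_{i\in T}|g_i|$ is a maximum of $\binom pj 2^j$ Gaussians with variance $j$. Its mean is therefore $\lesssim j\sqrt{\log(ep/j)}$, and it has Gaussian concentration with Lipschitz constant $\sqrt j$. The tail of the $j$-th ratio beyond $C+t$ is then at most $\exp(-c t^2 j\log(ep/j))$. Summing over $j$ (a geometric series in $j$) gives $\mathbb P(M>C+t)\lesssim e^{-ct^2}$, and integrating yields $\mathbb E M\le C'$. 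This concentration-plus-union step over $j$ is where care is needed. The majorization claim itself is the standard Hardy–Littlewood–Pólya fact for symmetric norms.
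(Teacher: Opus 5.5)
Your proposal is correct, but your route to the inequality differs from the paper's; the identity you handle exactly as the paper does.

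\textbf{The paper's route.} It proves a simultaneous bound on the individual order statistics: with probability at least $1-e^{-u}$,
\begin{align*}
g_j^*\le C\bigl(\sqrt{\log(ep/j)}+\sqrt u\bigr)\qquad\text{for all } j\le s .
\end{align*}
This comes from the union bound $\mathbb P(g_j^*>t)\le\binom pj\mathbb P(|g_1|>t)^j$. Since $\log(ep/j)\ge 1$, it amounts to the coordinatewise domination $g^{*,s}\le C(1+\sqrt u)\gamma^{(s)}$. Monotonicity of the unconditional norm (your fact (a)) then gives the bound, and integrating in $u$ finishes.

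\textbf{Your $\Delta$-split works too.} On that event $\|\Delta\|_\infty\lesssim\sqrt u$. The right crude bound is not the $\ell_1$ one you tried but
\begin{align*}
\|\Delta\|_{K^\circ}\le\|\Delta\|_\infty\Bigl\|\sum_{j\le s}e_j\Bigr\|_{K^\circ}\le\|\Delta\|_\infty\|\gamma^{(s)}\|_{K^\circ},
\end{align*}
and this is essentially the paper's argument.

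\textbf{Your route.} You control only the partial sums $\sum_{k\le j}g_k^*$. These are maxima of $\binom pj 2^j$ Gaussians of variance $j$ and are $\sqrt j$-Lipschitz in $g$. You then convert weak submajorization by $M\gamma^{(s)}$ into a norm bound via the Hardy--Littlewood--P\'olya/Ky Fan description of the hull of signed permutations. The steps check out: the denominator bound $\sum_{k\le j}\sqrt{\log(ep/k)}\ge j\sqrt{\log(ep/j)}$ is trivial, and the tail sum over $j$ is geometric once $t\ge 1$.

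\textbf{Trade-off.} The paper reaches a stronger probabilistic conclusion (coordinatewise rather than majorization domination), so it needs only monotonicity. Yours needs a weaker probabilistic input but the heavier convex-hull fact. On the other hand, yours mirrors the paper's proof of Lemma~\ref{lem:matching-lower-bound-gaussian-width}, which uses the same partial sums and Ky Fan dominance in the other direction. Together they show that $w(K^{(s)})\asymp\|\gamma^{(s)}\|_{K^\circ}$ is just the two-sided partial-sum comparison $\sum_{k\le j}g_k^*\asymp\sum_{k\le j}\gamma^{(s)}_k$.
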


\begin{lemma}[Matching lower bound for the order-statistic width]\label{lem:matching-lower-bound-gaussian-width}
Under the same assumptions as in Lemma \ref{lem:width-order-stat} there is a universal constant $c>0$ such that
\begin{align*}
    w(K^{(s)})
    \ge
    c\|\gamma^{(s)}\|_{K^\circ}.
\end{align*}
Consequently,
\begin{align*}
    w(K^{(s)}) \asymp \|\gamma^{(s)}\|_{K^\circ},
\end{align*}
with universal constants.
\end{lemma}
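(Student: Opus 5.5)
We will use the exact representation from Lemma~\ref{lem:width-order-stat},
\begin{align*}
    w(K^{(s)})=\EE\|(g_1^*,\ldots,g_s^*,0,\ldots,0)\|_{K^\circ},
\end{align*}
and compare the random vector $g^{*,(s)}:=(g_1^*,\ldots,g_s^*,0,\ldots,0)$ to the deterministic vector $\gamma^{(s)}$ coordinatewise.

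The key structural fact is that $\|\cdot\|_{K^\circ}$ is a symmetric norm. Symmetric norms are monotone in absolute values: if $|a_i|\le |b_i|$ for all $i$, then $\|a\|_{K^\circ}\le\|b\|_{K^\circ}$. To see this, note that for a symmetric convex body, shrinking one coordinate toward zero is a convex combination of the vector and its sign flip in that coordinate. So it suffices to find an event of constant probability on which $g_j^*\ge c\sqrt{\log(ep/j)}$ for every $j\le s$ simultaneously. On that event $\|g^{*,(s)}\|_{K^\circ}\ge c\|\gamma^{(s)}\|_{K^\circ}$, and since the norm is nonnegative, taking expectations gives $w(K^{(s)})\ge c\,\PP(\text{event})\,\|\gamma^{(s)}\|_{K^\circ}$.

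The main obstacle is producing this simultaneous lower bound with constant probability; a union bound over $j$ must not lose too much.

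\textbf{Step 1: small $j$.} Use the block structure of the order statistics. For $t_j:=c\sqrt{\log(ep/j)}$, we have $g_j^*\ge t_j$ iff $M_j:=\#\{i:|g_i|\ge t_j\}\ge j$. Now $M_j$ is binomial with mean $p\,\PP(|g|\ge t_j)$. Using the Gaussian tail lower bound $\PP(|g|\ge t)\gtrsim e^{-t^2/2}/(1+t)$ and taking $c$ small (say $c^2\le 1/2$), the mean is at least of order $p\,(j/(ep))^{c^2/2}/\sqrt{\log(ep/j)}$. This is at least $Cj\log(ej)$ or so, since $(p/j)^{1-c^2/2}$ dominates the logarithm.

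\textbf{Step 2: large $j$ and the union bound.} Chernoff then gives $\PP(M_j<j)\le \exp(-c'\,\EE M_j)$. This is summable over $j\ge1$ with total sum less than $1/2$, once the constants are tuned, provided $p/j$ is large. For $j$ comparable to $p$, say $j\ge p/C_0$, the target $t_j$ is of constant order, and we handle these $j$ separately: $M_j\ge j$ follows from the fact that a constant fraction of the $|g_i|$ exceed a small constant, again by Chernoff with mean linear in $p$. We can also shrink $c$ so that $t_j$ is below the corresponding quantile. If the uniform treatment proves messy, a fallback is to bound only $j$ in dyadic blocks $[2^k,2^{k+1})$ using monotonicity of $g_j^*$ and of $\log(ep/j)$, which loses only constants and reduces the union bound to $O(\log s)$ events with doubly-exponentially small failure probabilities. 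Either way, the conclusion is $\PP(\text{event})\ge1/2$.

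\textbf{Step 3: conclusion.} This yields $w(K^{(s)})\ge (c/2)\|\gamma^{(s)}\|_{K^\circ}$. Combined with the upper bound in Lemma~\ref{lem:width-order-stat}, we obtain $w(K^{(s)})\asymp\|\gamma^{(s)}\|_{K^\circ}$.
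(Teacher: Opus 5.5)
Your proof has a genuine gap. The failing step is the claim that the simultaneous event $\{g_j^*\ge c\sqrt{\log(ep/j)}\ \text{for all } j\le s\}$ has probability at least $1/2$. This is false once $s$ is comparable to $p$, and the lemma has to hold for every $s\le p$.

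The bottom order statistics are tiny, while $\gamma^{(s)}_j\ge 1$ for all $j\le s$. For $s=p$ your event contains $\{g_p^*\ge c\}=\{|g_i|\ge c\text{ for all } i\}$, whose probability is $\PP(|g_1|\ge c)^p\to 0$. So no universal $c$ works.

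Your separate treatment of $j\ge p/C_0$ does not repair this. A Chernoff bound with mean linear in $p$ gives $M_j\ge\theta p$ for some fixed $\theta<1$, hence $M_j\ge j$ only for $j\le\theta p$. Placing $t_j$ below the $(1-j/p)$-quantile of $|g_1|$ forces $c\to 0$ as $j\to p$, because that quantile tends to $0$. The dyadic fallback fails in its last block for the same reason.

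A smaller point: $\EE M_j\gtrsim j\log(ej)$ is false when $p/j$ is bounded. You only need $\EE M_j\ge aj$ with $a$ large, and that does hold for $p/j\ge C_0$.

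Coordinatewise domination is too strong here. You need a mechanism by which the large top order statistics pay for the small bottom ones.

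The paper uses weak majorization for this:
\begin{itemize}
\item It shows $\EE\sum_{j\le k}g_j^*\ge c\sum_{j\le k}\sqrt{\log(ep/j)}$ for every $k$. This uses $\sum_{j\le k}g_j^*\ge t\,(N_t\wedge k)$ with $t=a\sqrt{\log(ep/k)}$, together with Paley--Zygmund.
\item Hence $c\gamma^{(s)}$ is weakly submajorized by $\EE g^{*,(s)}$, and Ky Fan dominance for symmetric norms gives $\|\EE g^{*,(s)}\|_{K^\circ}\ge c\|\gamma^{(s)}\|_{K^\circ}$.
\item Jensen's inequality finishes the argument.
\end{itemize}

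Alternatively, you can salvage your own route:
\begin{itemize}
\item Run the simultaneous-event argument only for $j\le s':=\min\{s,\lceil p/C_0\rceil\}$. There $p/j$ is large, $\EE M_j\ge aj$, and the Chernoff union bound sums to less than $1/2$.
\item Use monotonicity to get $\|g^{*,(s)}\|_{K^\circ}\ge\|g^{*,(s')}\|_{K^\circ}$.
\item Split $\{1,\ldots,s\}$ into at most $C_0+1$ consecutive blocks of length at most $s'$. Since $\gamma$ is decreasing, each block of $\gamma^{(s)}$ is coordinatewise dominated, after a permutation, by $\gamma^{(s')}$.
\item By the triangle inequality, $\|\gamma^{(s)}\|_{K^\circ}\le (C_0+1)\|\gamma^{(s')}\|_{K^\circ}$, which gives the bound with a worse universal constant.
\end{itemize}
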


\subsection{Main result and proof}

We now state and prove a version of the oracle inequality in which the
complexity of $K^{(s)}$ is measured directly by its Gaussian width.

For $1\le s\le p$, define
\begin{align*}
    W_s:=w(\Sigma^{1/2}K^{(s)})
    =
    \EE\sup_{v\in K^{(s)}}\langle g,\Sigma^{1/2}v\rangle,
    \qquad g\sim N(0,I_p),
\end{align*}
and
\begin{align*}
    D_s:=\diam_2(\Sigma^{1/2}K^{(s)}).
\end{align*}
Thus $D_s\le \sqrt{\lambda_+}\,d_s$, where $d_s := \operatorname{diam}_2(K^{(s)})$. By Sudakov-Fernique we further have $\sqrt{\lambda_{\min}(\Sigma)}w(K^{(s)})\, \leq W_s = w(\Sigma^{1/2}K^{(s)}) \leq \sqrt{\lambda_{\max}(\Sigma)}\,w(K^{(s)}),$ and thus $W_s \asymp w_s := w(K^{(s)})$. Finally we mention that it also holds that $d_s \lesssim w_s$ \cite{vershynin2018high}. Let
\begin{align*}
    \ell_s:=c\log(e+s),
\end{align*}
for some large $c > 6$, and define the width complexity
\begin{align*}
    \operatorname{pen}(s)
    := \sigma\bigg(\frac{ w_s \sqrt{\ell_s}}{\sqrt{N}} + \frac{ w_s \ell_s}{N}\bigg) + \frac{ w_s^2\ell_s}{N}
\end{align*}
The small factors $\frac{ \sigma w_s \ell_s}{N}, \frac{ w_s^2\ell_s}{N}$ are usually negligible, but keeping them makes the high-probability multiplier bound completely clean.

The Gaussian-width penalized estimator is
\begin{align}\label{hd:estimator-width}
    (\widehat s,\widehat\beta)
    \in
    \argmin_{\substack{1\le s\le p, \\\nu\in K^{(s)}}}
    \left\{
        \frac1N\|Y-X\nu\|_2^2
        +
        \lambda
        \operatorname{pen}(s)
    \right\}.
\end{align}
More generally, one may replace $w_s$ by any computable upper bound
$\widetilde w_s\ge w_s $ in the definition of $\operatorname{pen}(s)$, provided that
\begin{align*}
    \widetilde w_{s+t}
    \le
    C\{\widetilde w_s+\widetilde w_t\},
    \qquad 1\le s,t,\ s+t\le p,
\end{align*}
For simplicity we state the theorem using the exact $w_s$, while noting that one may compute $w_s$ up to absolute constants by Lemmas \ref{lem:width-order-stat} and \ref{lem:matching-lower-bound-gaussian-width}. Thus clearly this computable version satisfies the above requirements as per Lemma \ref{lem:width-complexity-subadditive} $w_{s + t} \leq w_s + w_t$ in the admissible range.

\begin{theorem}[Oracle inequality with Gaussian width]
\label{thm:width-oracle}
Assume that $K\subseteq\RR^p$ is centrally symmetric, convex,
sign-invariant and permutation-invariant. Assume that the rows of $X$
are independent, mean-zero, $L$-sub-Gaussian with covariance $\Sigma$
satisfying
\begin{align*}
    \lambda_- I_p\preceq \Sigma\preceq \lambda_+ I_p,
\end{align*}
and that $\xi_1,\ldots,\xi_N$ are independent, mean-zero, sub-Gaussian
with $\|\xi_i\|_{\psi_2}\le \sigma$, independent of $X$.
Let $\widehat\beta$ be defined by \eqref{hd:estimator-width}.  If
$\lambda\ge \lambda_0$, where $\lambda_0$ depends only on
$L,\lambda_-,\lambda_+$, then
\begin{align*}
    \EE\|\widehat\beta-\beta\|_2^2
    \le
    C
    \inf_{1\le t\le p}
    \inf_{x\in K^{(t)}}
    \left\{
        \|x-\beta\|_2^2
        +
        (\lambda + 1)\operatorname{pen}(t)
    \right\}
\end{align*}
\end{theorem}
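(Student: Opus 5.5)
The plan is the standard penalized model-selection argument for least squares. The three ingredients are the basic inequality, a uniform lower isometry for $X$ over differences of points in $K^{(s)}$ and $K^{(t)}$, and a uniform control of the noise multiplier $\frac1N\langle \xi, Xu\rangle$ over all pairs $(s,t)$ simultaneously, paid for by the $\log(e+s)$ factors in $\operatorname{pen}$.

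\emph{Basic inequality.} Fix $t$ and $x\in K^{(t)}$, and write $u=\widehat\beta-x$. Optimality of $(\widehat s,\widehat\beta)$ against $(t,x)$ gives
\begin{align*}
\frac1N\|X(\widehat\beta-\beta)\|_2^2+\lambda\operatorname{pen}(\widehat s)\le \frac1N\|X(x-\beta)\|_2^2+\frac2N\langle\xi,Xu\rangle+\lambda\operatorname{pen}(t).
\end{align*}
The difference $u$ lies in $K^{(\widehat s)}+K^{(t)}\subseteq 2K^{(\widehat s+t)}$ when $\widehat s+t\le p$; otherwise it lies in $2K^{(p)}$. We also have $\frac1N\|X(\widehat\beta-\beta)\|^2\ge \frac12\frac1N\|Xu\|^2-\frac1N\|X(x-\beta)\|^2$.

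\emph{Upper and lower isometry.} On the set $\Sigma^{1/2}(K^{(s)}+K^{(t)})$, a Mendelson-type small-ball or Dirksen/Liaw--Mehrabian--Plan--Vershynin matrix deviation inequality gives, with probability $1-e^{-u^2}$,
\begin{align*}
\sup_{v}\big|\|Xv\|_2/\sqrt N-\|\Sigma^{1/2}v\|_2\big|\lesssim L^2\,\frac{W+uD}{\sqrt N},
\end{align*}
where $W$ and $D$ are the width and radius of the set. This requires $W\lesssim w_s+w_t$, which follows from Sudakov--Fernique and Lemma~\ref{lem:width-complexity-subadditive}, together with $D\lesssim w_s+w_t$. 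Take $u^2\asymp \ell_s+\ell_t$ and union bound over $(s,t)$; the sum $\sum_{s,t}(e+s)^{-c}(e+t)^{-c}$ is finite since $c>6$. This yields
\begin{align*}
\frac1N\|Xv\|^2\ge \frac{\lambda_-}{2}\|v\|^2-C\frac{(w_s^2\ell_s+w_t^2\ell_t)}{N}.
\end{align*}
It also yields the bound $\frac1N\|X(x-\beta)\|^2\lesssim \|x-\beta\|^2$ plus lower-order terms for the fixed pair, via a Bernstein bound on a single vector.

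\emph{Multiplier.} Conditionally on $X$, the process $v\mapsto \langle \xi,Xv\rangle$ is sub-Gaussian with metric $\sigma\|X(v-v')\|_2$. By generic chaining and the upper isometry, $\sup_{v\in K^{(s)}+K^{(t)}}\frac1N\langle\xi,Xv\rangle$ is at most
\begin{align*}
C\sigma\Big(\frac{(w_s+w_t)\sqrt{\ell_s+\ell_t}}{\sqrt N}+\frac{(w_s+w_t)(\ell_s+\ell_t)}{N}\Big),
\end{align*}
again union-bounded over pairs. This is $\lesssim\operatorname{pen}(s)+\operatorname{pen}(t)$, using $w\sqrt{\ell_t}\le$ cross terms that are absorbed by monotonicity of $w_s$. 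Choosing $\lambda\ge\lambda_0$ lets $\lambda\operatorname{pen}(\widehat s)$ on the left absorb every $\widehat s$-dependent term, leaving $\|u\|^2\lesssim \|x-\beta\|^2+(\lambda+1)\operatorname{pen}(t)$. Finally $\|\widehat\beta-\beta\|^2\le 2\|u\|^2+2\|x-\beta\|^2$.

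\emph{Expectation.} I integrate the tail in $u$, so the deviation terms contribute constants times $\operatorname{pen}$. On the bad event I use $\|\widehat\beta-\beta\|\le 2R$ together with the tail probability. This cost must be dominated by $w_s^2\ell_s/N$-type terms; I expect to handle it by letting $u$ range over all levels, so that the failure probability decays like $e^{-u^2}$ while the bound grows only like $u^2 D^2/N$.

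\emph{Main obstacle.} The main obstacle is the uniform-over-$\widehat s$ control, i.e.\ making the random model index harmless. I would run all deviation bounds on the sets $K^{(s)}+K^{(t)}$ with the $s$-dependent confidence level $\ell_s$, and rely on the subadditivity of widths. A secondary delicacy is the cross term between the multiplier and $\|u\|$. I would localize via homogeneity: bound $\langle\xi,Xu\rangle\le \|u\|\cdot(\ldots)$ plus the width term, then apply AM-GM against the $\frac{\lambda_-}{4}\|u\|^2$ term.
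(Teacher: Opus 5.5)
Your architecture matches the paper's. You use the basic inequality against a fixed $(t,x)$, and the matrix deviation inequality at confidence level $\ell_s+\ell_t$ so that the random index $\widehat s$ is covered by a union bound. You take a global (non-localized) multiplier bound, use subadditivity of $w_s$ plus monotonicity of $w_s,\ell_s$ to split the cross terms, and absorb the $\widehat s$-terms by $\lambda\operatorname{pen}(\widehat s)$.

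\textbf{The gap is in the multiplier step.} Conditionally on $X$, generic chaining bounds $\sup_{v\in K'}\frac1N\langle\xi,Xv\rangle$, with $K'=K^{(s)}+K^{(t)}$, by $\frac{\sigma}{N}(\gamma_2(XK',\|\cdot\|_2)+\sqrt{x}\,\diam(XK'))$.
\begin{itemize}
\item The upper isometry does control $\diam(XK')$.
\item But it is an additive statement: $\|Xv\|_2/\sqrt N\le\|\Sigma^{1/2}v\|_2+\delta$ with $\delta\lesssim(w_s+w_t+\sqrt{x}D)/\sqrt N$. It therefore gives no control of the normalized random metric $\|X(v-v')\|_2/\sqrt N$ at scales below $\delta$, and does not yield $\gamma_2(XK')\lesssim\sqrt N\,w(\Sigma^{1/2}K')$.
\item The naive metric comparison $\|Xu\|_2\lesssim\sqrt N\|\Sigma^{1/2}u\|_2$ on $K'-K'$ would force $\|X\Sigma^{-1/2}\|_{\mathrm{op}}\lesssim\sqrt N$, because $K'-K'$ spans all directions. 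That is false when $p\gg N$, which is exactly the regime of interest.
\end{itemize}
The paper conditions on $\xi$ instead. Given $\xi$, the map $v\mapsto\frac1N\langle X^\top\xi,v\rangle$ is sub-Gaussian with respect to the deterministic metric $CL\|\xi\|_2\|\Sigma^{1/2}(v-v')\|_2/N$, since the rows are independent and sub-Gaussian. Generic chaining plus majorizing measures then gives $CL\frac{\|\xi\|_2}{N}(w_s+d_s\sqrt{x})$. The bound $\|\xi\|_2\le C\sigma(\sqrt N+\sqrt x)$ produces exactly the $\sigma w\sqrt{\ell}/\sqrt N$ and $\sigma w\ell/N$ terms in $\operatorname{pen}$. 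Your conditioning can also be salvaged: stop the chain at level $2^n\asymp N$, apply the deviation inequality to increments in $K'-K'$, and bound the remainder by Cauchy--Schwarz. That argument has to be supplied, though.

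\textbf{The expectation step also needs repair.} The fallback $\|\widehat\beta-\beta\|_2\le 2R$ on the bad event cannot work. At confidence level $\ell_s+\ell_t$ the failure probability is a constant that does not decay in $N$, so $R^2\,\PP(\text{bad})$ is not of order $\operatorname{pen}(t)$. Moreover $R$ can dwarf $w_1$: for $K=B_\infty^p$, $R=\sqrt p$ while $w_1\asymp\sqrt{\log p}$. Integrating over all levels does work, but the deviation terms $vD^2/N$ involve the random radius of $K^{(\widehat s)}+K^{(t)}$. To handle this:
\begin{itemize}
\item Write the excess as a positive part and use $\EE\sup_s(\cdot)_+\le\sum_s\EE(\cdot)_+\lesssim\sum_s(e+s)^{-c}d_s^2/N$.
\item Then use $d_s^2\le s\,d_1^2$ (Lemma~\ref{lem:diameter-penalty-quasi-subadditive}) to get $\lesssim d_1^2/N\lesssim w_1^2/N\le\operatorname{pen}(t)$.
\end{itemize}
This is how Lemmas~\ref{lem:width-empirical} and \ref{lem:width-multiplier} are proved.

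Two simplifications are available. The AM--GM localization is unnecessary, since $\operatorname{pen}$ is calibrated to the global slow-rate multiplier bound. And $\frac1N\|X(x-\beta)\|_2^2$ needs no Bernstein bound: it enters linearly and $x-\beta$ is deterministic, so its expectation is $(x-\beta)^\top\Sigma(x-\beta)\le\lambda_+\|x-\beta\|_2^2$.
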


\subsubsection{Uniform empirical norm control}

We use the matrix deviation inequality directly in terms of Gaussian width.

\begin{lemma}[Width-based empirical norm bound]
\label{lem:width-empirical}
Under the assumptions of Theorem~\ref{thm:width-oracle}, 
\begin{align*}
   \EE \sup_{s \in [p]} \bigg[ \sup_{v\in 2K^{(s)}}
    \left|
        \frac{\|Xv\|_2}{\sqrt N}
        -
        \|\Sigma^{1/2}v\|_2
    \right|^2 - C^2
    \left\{
        \frac{w_s}{\sqrt N}
        +
        d_s\sqrt{\frac{\ell_s}{N}}
    \right\}^2\bigg]_+
    \le
    C
        d_1^2 \frac{1}{N} \lesssim \frac{w_1^2}{N}.
\end{align*}
We will abbreviate $q_s := C^2
    \left\{
        \frac{w_s}{\sqrt N}
        +
        d_s\sqrt{\frac{\ell_s}{N}}
    \right\}^2 \lesssim w_s^2 \frac{\ell_s}{N}$,  where we used the well known bound $d_s \lesssim w_s$.
\end{lemma}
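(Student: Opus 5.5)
The plan is to combine the matrix deviation inequality (in its high-probability form, e.g. \cite{vershynin2018high}, Theorem 9.1.1 / Exercise 9.1.8) with a union bound over $s\in[p]$ whose cost is absorbed by the logarithmic factor $\ell_s=c\log(e+s)$, and then to integrate the resulting tail to obtain the expectation bound.

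\textbf{Step 1: fixed $s$.} For a fixed bounded set $T\subseteq\RR^p$, the matrix deviation inequality for a matrix $X$ with independent, isotropic-after-whitening, $L$-sub-Gaussian rows gives, for every $u\ge 0$ and with probability at least $1-2e^{-u^2}$,
\begin{align*}
    \sup_{v\in T}\left|\|Xv\|_2-\sqrt N\|\Sigma^{1/2}v\|_2\right|
    \le C_0\left\{w(\Sigma^{1/2}T)+u\,\operatorname{rad}(\Sigma^{1/2}T)\right\},
\end{align*}
where $C_0$ depends only on $L$. To apply it to $X=Z\Sigma^{1/2}$ with $Z$ isotropic, we write $v\mapsto \Sigma^{1/2}v$. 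Taking $T=2K^{(s)}$ and using $W_s\le\sqrt{\lambda_+}\,w_s$ and $\operatorname{rad}(\Sigma^{1/2}2K^{(s)})\le \sqrt{\lambda_+}\,d_s$, then dividing by $\sqrt N$, we obtain
\begin{align*}
    \sup_{v\in 2K^{(s)}}\left|\frac{\|Xv\|_2}{\sqrt N}-\|\Sigma^{1/2}v\|_2\right|
    \le C_1\left\{\frac{w_s}{\sqrt N}+u\frac{d_s}{\sqrt N}\right\}
\end{align*}
with probability at least $1-2e^{-u^2}$.

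\textbf{Step 2: union bound.} Fix $t\ge 0$ and set $u_s^2=\ell_s+t^2$, i.e.\ $u_s\le\sqrt{\ell_s}+t$. With $c>6$, the failure probabilities sum to
\[
\sum_s 2e^{-\ell_s-t^2}\le 2e^{-t^2}\sum_s (e+s)^{-c}\lesssim e^{-t^2}.
\]
On the complementary event, simultaneously for all $s$,
\begin{align*}
    \sup_{v\in 2K^{(s)}}\left|\cdots\right|
    \le C_1\left\{\frac{w_s}{\sqrt N}+d_s\sqrt{\frac{\ell_s}{N}}\right\}+C_1\frac{t\,d_s}{\sqrt N}.
\end{align*}
Use $(a+b)^2\le 2a^2+2b^2$ and choose $C^2\ge 2C_1^2$. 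Then on this event the bracket satisfies
\[
[\,\cdot\,]_+\le 2C_1^2t^2d_s^2/N\le 2C_1^2 t^2 d_p^2/N .
\]

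\textbf{Step 3: removing the $s$-dependence, then integration.} The bound of Step 2 still depends on $d_s$, whereas the target is $d_1$. This is the step I expect to be the main obstacle. Monotonicity only gives $d_s\le d_p$, so we need a finer argument. The idea is to absorb the deviation term into $q_s$ for the large-$s$ indices: whenever $t\le\sqrt{\ell_s}$, the term $t d_s/\sqrt N$ is already dominated by $d_s\sqrt{\ell_s/N}$ and is covered by doubling $C$. So let the tail parameter enter additively through $u_s^2=\ell_s+t^2$ and, using $\sqrt{\ell_s+t^2}\le\sqrt{2\ell_s}$ when $t^2\le\ell_s$, handle only the indices with $\ell_s<t^2$, i.e.\ $s\lesssim e^{t^2/c}$. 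For those we still need $d_s\lesssim d_1$ up to something integrable. Alternatively, one can use the subadditivity $d_s\le s\,d_1$, together with $\|v\|_2\le \sqrt{s}\,\|v\|_\infty$-type control on sparse points of $K$, to get $d_s\lesssim \sqrt{s}\,d_1$ or $d_s\lesssim s\,d_1$ with $s\le e^{t^2/c}$; this growth is absorbed into a slightly worse Gaussian tail $e^{-t^2(1-2/c)}$, which is still integrable. Integrating, $\EE[\sup_s[\cdot]_+]\le\int_0^\infty \PP(\cdot>x)\,dx\lesssim d_1^2/N$.

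The final comparison $d_1\lesssim w_1$ is the standard bound $\operatorname{rad}(T)\lesssim w(T)$ for symmetric $T$, which gives the last inequality in the statement.
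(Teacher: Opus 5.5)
Your argument is correct, but it is organized differently from the paper's. The paper never forms a simultaneous event over $s$. For each fixed $s$ it takes $x=\ell_s+u$ in the matrix deviation tail and integrates in $u$ to get $\EE Z_{s,+}\lesssim e^{-\ell_s}d_s^2/N$. It then uses nonnegativity of the brackets to bound $\EE\sup_s Z_{s,+}\le\sum_s\EE Z_{s,+}$, and concludes with $d_s^2\le s\,d_1^2$ (Lemma~\ref{lem:diameter-penalty-quasi-subadditive}) and $\sum_s s(e+s)^{-c}<\infty$.

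You instead union-bound with $u_s^2=\ell_s+t^2$. On this event the bracket is already nonpositive for every $s$ with $\ell_s\ge t^2$, since $\sqrt{\ell_s+t^2}\le\sqrt{2\ell_s}$ and $C^2\ge 2C_1^2$. For the remaining indices $s<e^{t^2/c}$ you use $d_s^2\le s\,d_1^2\le e^{t^2/c}d_1^2$. This gives $\PP\bigl(\sup_s[\cdot]_+>2C_1^2t^2e^{t^2/c}d_1^2/N\bigr)\lesssim e^{-t^2}$, which integrates to $O(d_1^2/N)$ because $c>6$.

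The obstacle you flagged in Step~3 is exactly what the paper's sup-by-sum step avoids: there the weight $(e+s)^{-c}$ absorbs the growth of $d_s^2$ directly, with no case split on $s$ versus $t$. Either of the growth bounds you mention suffices. The cleanest justification is your $\ell_\infty$ remark: each coordinate of a point of $K$ is at most $d_1/2$ in absolute value because its $1$-sparse projection lies in $K$, so $d_s\le\sqrt{s}\,d_1$. The cruder $d_s\le s\,d_1$, from $K^{(s+t)}\subseteq K^{(s)}+K^{(t)}$, gives the tail $e^{-t^2(1-2/c)}$ and also works. Your route yields a high-probability bound on the supremum itself, not just on its expectation; the paper's route is shorter.
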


\begin{proof}

Let $Z_i=\Sigma^{-1/2}X_i$.  Then $Z_i$ is isotropic and $L$-sub-Gaussian. Applying Remark 9.1.4 of \cite{vershynin2018high} to the set $\Sigma^{1/2}K^{(s)}$ gives
\begin{align*}
    \sup_{v\in K^{(s)}}
    \left|
        \frac{\|Xv\|_2}{\sqrt N}-\|\Sigma^{1/2}v\|_2
    \right|
    \le
    C L^2\frac{w(\Sigma^{1/2}K^{(s)})+\sqrt{u}\operatorname{rad}(\Sigma^{1/2}K^{(s)})}{\sqrt N},
\end{align*}
Observe that we have
\begin{align*}
    w(\Sigma^{1/2}K^{(s)})
    &\le
    \sqrt{\lambda_{\max}(\Sigma)}\,w(K^{(s)}),
    \\
    \diam(\Sigma^{1/2}K^{(s)})
    &\le
    \sqrt{\lambda_{\max}(\Sigma)}\,d_s.
\end{align*}
where the first bound follows by Sudakov-Fernique's inequality. Thus since we are assuming universal bounds on the eigenvalues of $\Sigma$ we obtain, with probability at least $1-Ce^{-x}$,
\begin{align*}
    \sup_{v\in 2K^{(s)}}
    \left|
        \frac{\|Xv\|_2}{\sqrt N}
        -
        \|\Sigma^{1/2}v\|_2
    \right|
    \le
    C
    \left\{
        \frac{w_s}{\sqrt N}
        +
        d_s\sqrt{\frac{x}{N}}
    \right\}.
\end{align*}
Set $x=\ell_s+u$. Consider the variable

\begin{align*}
    Z = \bigg(\bigg(\sup_{v\in 2K^{(s)}}
    \left|
        \frac{\|Xv\|_2}{\sqrt N}
        -
        \|\Sigma^{1/2}v\|_2
    \right|^2 - A \bigg(\frac{w_s}{\sqrt N}
        +
        d_s\sqrt{\frac{\ell_s}{N}}\bigg)^2\bigg),
\end{align*}
for a large enough constant $A$. Let $Z_+ = (Z)_+$. Then upon using the formula $\EE Z_+ = \int_{0}^\infty \PP(Z_+ > t) dt =\int_{0}^\infty \PP(Z > t) dt
$ it is easy to see that
\begin{align*}
    \EE Z_+ \lesssim e^{-\ell_s} d_s^2/N.
\end{align*}

Next as a lemma in the appendix shows $d_s^2 \lesssim s d_1^2$ (see Lemma \ref{lem:diameter-penalty-quasi-subadditive}) and hence upon summing this we conclude that 
\begin{align*}
    \EE \sup_{s \geq 1} Z_+ \leq \sum_{s = 1}^p e^{-\ell_s} d_s^2/N\lesssim d_1^2/N.
\end{align*}

\end{proof}

\subsubsection{Noise multiplier bound}

We next present a noise multiplier bound with a direct Gaussian-width multiplier bound.

\begin{lemma}[Width-based multiplier bound]
\label{lem:width-multiplier}
Under the assumptions of Theorem~\ref{thm:width-oracle}, 
\begin{align*}
    \MoveEqLeft\EE \sup_{s\geq 1}\bigg( \sup_{v\in 2K^{(s)}}
    \frac{2}{N}|\langle Xv,\xi\rangle| - A\sigma \left[
        \frac{w_s}{\sqrt N}
        \left(1+\sqrt{\frac{\ell_s}{N}}\right)
        +
        d_s
        \left(
            \sqrt{\frac{\ell_s}{N}}
            +
            \frac{\ell_s}{N}
        \right)
    \right]\bigg)_+ \\
    & \lesssim \sigma\bigg(\frac{w_1}{ \sqrt{N}}
        +
        d_1
        \left(
            \sqrt{\frac{1}{N}}
            +
            \frac{1}{N}\bigg)
        \right)\\
        & \lesssim \sigma\frac{w_1}{ \sqrt{N}}
\end{align*}

We will abbreviate $z_s :=  A\sigma \left[
        \frac{w_s}{\sqrt N}
        \left(1+\sqrt{\frac{\ell_s}{N}}\right)
        +
        d_s
        \left(
            \sqrt{\frac{\ell_s}{N}}
            +
            \frac{\ell_s}{N}
        \right)
    \right] \lesssim \sigma \bigg( w_s \sqrt{\frac{\ell_s}{N}} + w_s \frac{\ell_s}{N}\bigg)$, where we used the well known bound $d_s \lesssim w_s$.
\end{lemma}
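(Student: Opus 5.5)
The approach is to condition on $X$ and treat $\langle Xv,\xi\rangle$ as a sub-Gaussian process in $v$, indexed by $2K^{(s)}$. We then control its supremum using generic chaining in a random metric and handle the randomness of $X$ through Lemma~\ref{lem:width-empirical}. Finally, we sum tail bounds over $s$, exactly as in the proof of Lemma~\ref{lem:width-empirical}.

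\textbf{Step 1 (conditional sub-Gaussianity).} Fix $X$. Since the $\xi_i$ are independent, mean-zero and $\sigma$-sub-Gaussian, the process $v\mapsto \frac1N\langle Xv,\xi\rangle$ has sub-Gaussian increments with respect to the metric $\rho_X(u,v)=\sigma\|X(u-v)\|_2/N$. Talagrand's majorizing measure theorem (or Dudley together with comparison) together with the standard tail bound for suprema then gives, with conditional probability at least $1-2e^{-x}$,
\begin{align*}
    \sup_{v\in 2K^{(s)}}\frac1N|\langle Xv,\xi\rangle|
    \lesssim \frac{\sigma}{\sqrt N}\Big(\gamma_2(2K^{(s)},\|X\cdot\|_2/\sqrt N)+\sqrt{x}\,\sup_{v\in 2K^{(s)}}\|Xv\|_2/\sqrt N\Big).
\end{align*}
We have $\gamma_2$ here rather than a width because $X$ is fixed.

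\textbf{Step 2 (removing $X$).} This is the main obstacle. A cleaner route avoids $\gamma_2$ of the empirical metric. Write $\frac1N\langle Xv,\xi\rangle=\frac{\|\xi\|_2}{\sqrt N}\cdot\frac1{\sqrt N}\langle v, X\T \theta\rangle$ with $\theta=\xi/\|\xi\|_2$, which is independent of $X$ only if $\xi$ is treated conditionally. Conditionally on $\xi$, the vector $X\T\xi/\|\xi\|_2=\sum_i\theta_iX_i$ is a sub-Gaussian vector with covariance $\Sigma$ and parameter $\lesssim L$, since the rows are independent and $\sum_i\theta_i^2=1$. Talagrand's majorizing measure theorem in $\RR^p$ then yields, conditionally on $\xi$, with probability $1-2e^{-x}$,
\begin{align*}
    \sup_{v\in 2K^{(s)}}|\langle v,X\T\theta\rangle|\lesssim L\big(W_s+\sqrt{x}\,D_s\big)\lesssim w_s+\sqrt{x}\,d_s .
\end{align*}
For the prefactor, Bernstein's inequality for $\sum\xi_i^2$ gives $\|\xi\|_2/\sqrt N\lesssim \sigma(1+\sqrt{x/N}+x/N)^{1/2}$, hence $\lesssim\sigma(1+\sqrt{x/N})$, with probability $1-e^{-x}$. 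Multiplying the two bounds, we obtain with probability $1-Ce^{-x}$
\begin{align*}
    \sup_{v\in2K^{(s)}}\frac2N|\langle Xv,\xi\rangle|\lesssim \sigma\Big(\frac{w_s}{\sqrt N}+d_s\sqrt{\frac xN}\Big)\Big(1+\sqrt{\frac xN}\Big).
\end{align*}
Expanding this product gives exactly the form in $z_s$ when $x=\ell_s$: namely $w_s/\sqrt N\,(1+\sqrt{\ell_s/N})+d_s(\sqrt{\ell_s/N}+\ell_s/N)$.

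\textbf{Step 3 (integration and union over $s$).} Set $x=\ell_s+u$ and let $Z_s$ denote the positive part in the statement. Using $\sqrt{\ell_s+u}\le\sqrt{\ell_s}+\sqrt u$ and choosing $A$ large, the event $Z_s>t$ forces $u$ to satisfy $\sigma(w_s/\sqrt N+d_s(\sqrt{u/N}+u/N))\gtrsim t$. The tail-integration formula $\EE Z_{s}=\int_0^\infty\PP(Z_s>t)\,dt$ then gives
\begin{align*}
    \EE Z_s\lesssim e^{-\ell_s}\sigma\Big(\frac{w_s}{\sqrt N}+d_s\big(N^{-1/2}+N^{-1}\big)\Big).
\end{align*}
Bounding $\EE\sup_sZ_s\le\sum_s\EE Z_s$, we use subadditivity $w_s\le s\,w_1$ (Lemma~\ref{lem:width-complexity-subadditive}) and $d_s\lesssim\sqrt s\,d_1$ (Lemma~\ref{lem:diameter-penalty-quasi-subadditive}). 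Since $e^{-\ell_s}=(e+s)^{-c}$ with $c>6$, the series $\sum_s s(e+s)^{-c}$ converges, giving the first displayed bound. The last inequality follows from $d_1\lesssim w_1$.
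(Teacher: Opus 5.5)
Your proposal is correct and follows essentially the same route as the paper. You condition on $\xi$; your observation that $X^\top\theta$ is a sub-Gaussian vector with covariance $\Sigma$ is a repackaging of the paper's conditional increment bound $\|Z_v-Z_w\|_{\psi_2\mid\xi}\lesssim L\|\xi\|_2 N^{-1}\|\Sigma^{1/2}(v-w)\|_2$. You then use generic chaining with the majorizing-measure comparison to get $w_s+d_s\sqrt{x}$, concentration for $\|\xi\|_2$, the choice $x=\ell_s+u$, tail integration, and summation over $s$ via $w_s\le s w_1$ and $d_s\lesssim\sqrt{s}\,d_1$, exactly as the paper does. Your Step 1 (conditioning on $X$ and working with $\gamma_2$ of the empirical metric) is an unnecessary detour that you rightly abandon, and the slightly loose description of the threshold in Step 3 does not affect the final bound.
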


\begin{proof}
Fix $s$ and condition on $\xi=(\xi_1,\ldots,\xi_N)$.  For
$v\in 2K^{(s)}$, define
\begin{align*}
    Z_v:=\frac1N\sum_{i=1}^N \xi_i\langle X_i,v\rangle .
\end{align*}
For $v,w\in 2K^{(s)}$, by independence and the sub-Gaussian assumption on
the rows of $X$,
\begin{align*}
    \|Z_v-Z_w\|_{\psi_2\mid \xi}
    \le
    C L\frac{\|\xi\|_2}{N}
    \|\Sigma^{1/2}(v-w)\|_2 .
\end{align*}
Hence, conditionally on $\xi$, $\{Z_v:v\in 2K^{(s)}\}$ is a
sub-Gaussian process with respect to the metric
\begin{align*}
    d_\xi(v,w)
    =
    C L\frac{\|\xi\|_2}{N}
    \|\Sigma^{1/2}(v-w)\|_2 .
\end{align*}
By the generic chaining bound for sub-Gaussian processes
\cite[Theorem 8.5.2 and Remark 8.5.4]{vershynin2018high},
together with the majorizing-measure comparison between $\gamma_2$ and
Gaussian width, with conditional probability at least $1-Ce^{-x}$,
\begin{align*}
    \sup_{v\in 2K^{(s)}} |Z_v|
    \le
    C L\frac{\|\xi\|_2}{N}
    \left\{
        w_s+d_s\sqrt{x}
    \right\}.
\end{align*}
Equivalently,
\begin{align*}
    \sup_{v\in 2K^{(s)}}
    \frac{1}{N}|\langle Xv,\xi\rangle|
    \le
    C L\frac{\|\xi\|_2}{N}
    \left\{
        w_s+d_s\sqrt{x}
    \right\}.
\end{align*}

By concentration of the Euclidean norm of a vector with independent
sub-Gaussian entries,
\begin{align*}
    \|\xi\|_2
    \le
    C\sigma(\sqrt N+\sqrt{x})
\end{align*}
with probability at least $1-Ce^{-x}$. Therefore, with probability at
least $1-Ce^{-x}$,
\begin{align*}
    \sup_{v\in 2K^{(s)}}
    \frac{1}{N}|\langle Xv,\xi\rangle|
    \le
    C\sigma
    \frac{\sqrt N+\sqrt{x}}{N}
    \left\{
        w_s+d_s\sqrt{x}
    \right\}.
\end{align*}
Expanding the product gives
\begin{align*}
    \sup_{v\in 2K^{(s)}}
    \frac{1}{N}|\langle Xv,\xi\rangle|
    \le
    C\sigma
    \left[
        \frac{w_s}{\sqrt N}
        +
        \frac{w_s\sqrt{x}}{N}
        +
        d_s\sqrt{\frac{x}{N}}
        +
        d_s\frac{x}{N}
    \right].
\end{align*}
Finally, set $x=\ell_s+u$ to see the first identity. 

Similarly to the proof of Lemma \ref{lem:width-empirical} we now have that 
\begin{align*}
\MoveEqLeft \EE \sup_{v\in 2K^{(s)}}
     \bigg(\frac{2}{N}|\langle Xv,\xi\rangle| - A\sigma \left[
        \frac{w_s}{\sqrt N}
        \left(1+\sqrt{\frac{\ell_s}{N}}\right)
        +
        d_s
        \left(
            \sqrt{\frac{\ell_s}{N}}
            +
            \frac{\ell_s}{N}
        \right)
    \right] \bigg)_+ \\
    &\lesssim e^{-\ell_s} \sigma \bigg(\frac{w_s}{\sqrt{N}} + d_s \bigg(\frac{1}{\sqrt{N}} + \frac{1}{N}\bigg)\bigg).
\end{align*}
Thus the proof is completed by telescoping as before upon noting that $w_s \leq sw_1$ and $d_s \lesssim \sqrt{s} d_1 \lesssim s d_1$ as shown in Lemma \ref{lem:width-complexity-subadditive} and Lemma \ref{lem:diameter-penalty-quasi-subadditive} in the appendix.
\end{proof}

\subsubsection{Basic inequality and oracle inequality}

Fix $t\in[p]$ and $x\in K^{(t)}$.  Define
\begin{align*}
    a:=x-\beta,
    \qquad
    h:=\widehat\beta-x.
\end{align*}
Then
\begin{align*}
    \widehat\beta-\beta=h+a.
\end{align*}
Since
\begin{align*}
    \widehat\beta\in K^{(\widehat s)},
    \qquad
    x\in K^{(t)},
\end{align*}
we have
\begin{align*}
    h=\widehat\beta-x
    \in
    K^{(\widehat s)}-K^{(t)}
    \subseteq
    2K^{(\widehat s+t)}.
\end{align*}
Set
\begin{align*}
    m:=(\widehat s+t)\wedge p.
\end{align*}

By the definition of $(\widehat s,\widehat\beta)$,
\begin{align*}
    \frac1N\|Y-X\widehat\beta\|_2^2
    +
    \lambda \operatorname{pen}(\widehat s)
    \le
    \frac1N\|Y-Xx\|_2^2
    +
    \lambda \operatorname{pen}(t).
\end{align*}
Using $Y=X\beta+\xi$, expanding the squares, and applying Cauchy's
inequality gives
\begin{align*}
    \|\Sigma^{1/2}h\|_2^2/2
    &\le\|\Sigma^{1/2}h\|_2^2/2-\frac1N\|Xh\|_2^2 + 
    \frac4N\|Xa\|_2^2
    +
    \frac4N|\langle \xi,Xh\rangle|
    +
    2\lambda \cdot \operatorname{pen}(t)
    -
    2\lambda \cdot \operatorname{pen}(\widehat s)\\
    & \leq (\|\Sigma^{1/2}h\|_2-\frac{1}{\sqrt{N}}\|Xh\|_2)^2 + 
    \frac4N\|Xa\|_2^2
    +
    \frac4N|\langle \xi,Xh\rangle|
    +
    2\lambda \cdot \operatorname{pen}(t)
    -
    2\lambda \cdot \operatorname{pen}(\widehat s)
    \label{eq:width-basic-prediction}\\
    & \leq ((\|\Sigma^{1/2}h\|_2-\frac{1}{\sqrt{N}}\|Xh\|_2)^2 - q_m)_+ + q_m  \\
    &+ 
    \frac4N\|Xa\|_2^2
    +
    4(\frac1N|\langle \xi,Xh\rangle| - C z_m)_+ + 4Cz_m
    +
    2\lambda \cdot \operatorname{pen}(t) -
    2\lambda \cdot \operatorname{pen}(\widehat s)\\
    & \leq ((\|\Sigma^{1/2}h\|_2-\frac{1}{\sqrt{N}}\|Xh\|_2)^2 - q_m)_+ + q_m \\
    &+ 
    \frac4N\|Xa\|_2^2
    +
    4(\frac1N|\langle \xi,Xh\rangle| - C z_m)_+ + 4Cz_m
    +
    2\lambda \cdot \operatorname{pen}(t) -
    2\lambda \cdot \operatorname{pen}(\widehat s).
\end{align*}
Thus
\begin{align*}
    \|\Sigma^{1/2}h\|_2^2/2 & \lesssim ((\|\Sigma^{1/2}h\|_2-\frac{1}{\sqrt{N}}\|Xh\|_2)^2 - q_m)_+ + \sigma\bigg(\frac{w_m \sqrt{\ell_m}}{\sqrt{N}} + \frac{w_m {\ell_m}}{{N}}\bigg) + \frac{w_m^2 \ell_m}{N}  \\
    &+ 
    \frac1N\|Xa\|_2^2
    +
    2(\frac1N|\langle \xi,Xh\rangle| - C z_m)_+ 
    +
    \lambda \cdot \operatorname{pen}(t) -
    \lambda \cdot \operatorname{pen}(\widehat s),
\end{align*}
where we used the definitions of $q_m$ and $z_m$. Next by Lemma \ref{lem:width-complexity-subadditive} (below) using $w_{m} \leq w_{\widehat s} + w_t$, (which implies) $w^2_{m} \lesssim w^2_{\widehat s} + w^2_t$ and also the fact that $\ell_{m} \lesssim \ell_{\widehat s} + \ell_{t}$ (and using a maximal reasoning) we conclude:
\begin{align}
     \sigma\bigg(\frac{w_m \sqrt{\ell_m}}{\sqrt{N}} + \frac{w_m {\ell_m}}{{N}}\bigg) + \frac{w_m^2 \ell_m}{N}  \lesssim \sigma\bigg(\frac{w_{\widehat s} \sqrt{\ell_{\widehat s}}}{\sqrt{N}}+ \frac{w_{\widehat s} {\ell_{\widehat s}}}{{N}} \bigg)+ \frac{w_{\widehat s}^2\ell_{\widehat s}}{N} + \sigma\bigg(\frac{w_t \sqrt{\ell_t}}{\sqrt{N}} + \frac{w_t {\ell_t}}{{N}}\bigg) + \frac{w_t^2 \ell_t}{N}.
\end{align}
It follows that when $\lambda$ is sufficiently large $\lambda \cdot \operatorname{pen}(\widehat s)$ can absorb the term $\sigma\bigg(\frac{w_{\widehat s} \sqrt{\ell_{\widehat s}}}{\sqrt{N}} + \frac{w_{\widehat s} {\ell_{\widehat s}}}{{N}}\bigg) + \frac{w_{\widehat s}^2\ell_{\widehat s}}{N}$, and in addition the term $\sigma\bigg(\frac{w_t \sqrt{\ell_t}}{\sqrt{N}} + \frac{w_t {\ell_t}}{{N}} \bigg)+ \frac{w_t^2 \ell_t}{N}$ can be absorbed in $\lambda\operatorname{pen}(t)$.

Taking the supremum over $h\in 2K^{(m)}$ and then over $m\ge1$, we obtain
\begin{align*}
    \lambda_-/2 \|h\|_2^2 \leq \|\Sigma^{1/2}h\|_2^2/2 & \lesssim \sup_{m \geq 1}\sup_{h \in 2 K^{(m)}}((\|\Sigma^{1/2}h\|_2-\frac{1}{\sqrt{N}}\|Xh\|_2)^2 - q_m)_+  \\
    &+ 
    C\frac1N\|Xa\|_2^2
    +
    \sup_{m \geq 1}\sup_{h \in 2 K^{(m)}} (C\frac1N|\langle \xi,Xh\rangle| - C z_m)_+ 
    +
    C\lambda \cdot \operatorname{pen}(t).
\end{align*}
Note that $\|\hat\beta - \beta\|_2^2 = \|h + a\|_2^2 \leq 2 \|h\|_2^2 + 2 \|a\|_2^2$. Taking expectations, using
\begin{align*}
    \EE\frac1N\|Xa\|_2^2
    =
    a^\top\Sigma a
    \le
    \lambda_+\|a\|_2^2,
\end{align*}
we obtain
\begin{align*}
    \EE\|\widehat\beta-\beta\|_2^2
    \le
    C\|x-\beta\|_2^2
    +
    C\lambda \operatorname{pen}(t)
    +
    C\sigma\frac{w_1}{\sqrt{N}}
     + C\frac{w_1^2}{N}
    \le
    C\|x-\beta\|_2^2
    +
    (C\lambda + C) \operatorname{pen}(t)
\end{align*}
Since $t$ and $x\in K^{(t)}$ were arbitrary, taking the infimum over
$t$ and $x\in K^{(t)}$ proves Theorem~\ref{thm:width-oracle}.

\begin{lemma}[Quasi-subadditivity of the width complexity]
\label{lem:width-complexity-subadditive}
For $s,t\ge1$, with the convention $K^{(r)}=K^{(p)}$ for $r\ge p$,
\begin{align*}
    w_{s+t}
    \le
    \{w_s+ w_t\}.
\end{align*}
Consequently,
\begin{align*}
    w_{s+t}^2
    \le
    C\{w_s^2+w_t^2\}.
\end{align*}
Furthermore, 
\begin{align*}
    \ell_{s + t}=c\log(e+s+t)
    \le
    C\{\ell_s+\ell_t\}.
\end{align*}
\end{lemma}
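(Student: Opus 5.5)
The key step is a set inclusion: $K^{(s+t)}\subseteq K^{(s)}+K^{(t)}$ (Minkowski sum). Gaussian width is additive under Minkowski sums, and monotone under inclusion, so the inclusion gives $w_{s+t}\le w(K^{(s)}+K^{(t)})=w_s+w_t$ at once. When $s+t\ge p$ the convention $K^{(s+t)}=K^{(p)}$ applies, and the inclusion $K^{(p)}\subseteq K^{(s)}+K^{(t)}$ is still needed. The argument below handles this directly as long as $s+t\ge p$, using supports split into pieces of size at most $s$ and $t$.

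To prove the inclusion, note that the right-hand side is convex, since it is a sum of convex sets. It therefore suffices to show that every extreme generator $v\in S_{s+t}$ lies in $K^{(s)}+K^{(t)}$; here $S_{s+t}$ means $S_{\min(s+t,p)}$. Fix such a $v\in K$ with support $T$, $|T|\le s+t$.

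The obvious split $v=v_{T_1}+v_{T_2}$ with $|T_1|\le s$ and $|T_2|\le t$ does not work. Each piece lies in $K$, because coordinate projections preserve membership in $K$, as used in the proof of Lemma~\ref{lem:Ks-polar}. But the sum then only lands in $K^{(s)}+K^{(t)}$ with each piece needing to be in the set itself, and $v_{T_1}\in S_s\subseteq K^{(s)}$ and $v_{T_2}\in K^{(t)}$. That is exactly what we want, with no factor 2, so the naive split does work after all.

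The cleaner route is the support-function version. By Lemma~\ref{lem:Ks-polar}, $h_{K^{(s+t)}}(z)=\|z_{T_{s+t}(z)}\|_{K^\circ}$. Splitting $T_{s+t}(z)$ into its top $s$ indices and the remaining $\le t$ indices gives, by the triangle inequality for $\|\cdot\|_{K^\circ}$ and the max formula of Lemma~\ref{lem:Ks-polar},
\[
h_{K^{(s+t)}}(z)\le h_{K^{(s)}}(z)+h_{K^{(t)}}(z).
\]
Taking $z=g$ and expectations gives $w_{s+t}\le w_s+w_t$. I would write the proof this way, since it avoids any extreme-point discussion.

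The remaining claims are elementary:
\begin{itemize}
\item $w_{s+t}^2\le (w_s+w_t)^2\le 2(w_s^2+w_t^2)$.
\item Since $s,t\ge1$, $e+s+t\le (e+s)(e+t)$, so $\log(e+s+t)\le\log(e+s)+\log(e+t)$ and hence $\ell_{s+t}\le \ell_s+\ell_t$.
\end{itemize}
There is no real obstacle. The only care point is the truncation convention for $s+t>p$: in that case $T_{s+t}(z)$ is simply all of $[p]$, and the split into the top $s$ indices and the rest (at most $t$ indices) still works.
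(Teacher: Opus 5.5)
Your proposal is correct and takes essentially the paper's route: the paper proves $K^{(s+t)}\subseteq K^{(s)}+K^{(t)}$ using exactly the support split $v=v_{T_1}+v_{T_2}$, and your support-function version via Lemma~\ref{lem:Ks-polar} is just the dual form of that inclusion. You also make the constants explicit ($C=2$ for the squares, and $C=1$ for $\ell$ via $e+s+t\le(e+s)(e+t)$); do delete the self-contradictory paragraph that first claims the naive split fails and then concludes it works.
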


\begin{proof}
We first claim that
\begin{align*}
    K^{(s+t)}
    \subseteq
    K^{(s)}+K^{(t)}.
\end{align*}
Indeed, let $v\in K$ have support of size at most $s+t$.  Split its
support into two sets $A$ and $B$ with $|A|\le s$ and $|B|\le t$.
Since $K$ is sign-invariant and permutation-invariant, coordinate
projections preserve membership in $K$. Hence $v_A\in K^{(s)}$ and
$v_B\in K^{(t)}$, and $v=v_A+v_B$. Taking convex hulls gives the
claim.

Therefore
\begin{align*}
    \Sigma^{1/2} K^{(s+t)}
    \subseteq
    \Sigma^{1/2}K^{(s)}+\Sigma^{1/2}K^{(t)}.
\end{align*}
Gaussian width is subadditive under Minkowski sums, so
\begin{align*}
    W_{s+t}
    \le
    W_s+W_t,
\end{align*}
and also
\begin{align*}
    w_{s+t}
    \le
    w_s+w_t,
\end{align*}

Finally it is clear that,
\begin{align*}
    \ell_{s + t}=c\log(e+s+t)
    \le
    C\{\ell_s+\ell_t\}.
\end{align*}
This completes the proof of the result.
\end{proof}

\section{Minimax Optimality}\label{minimax:sec}

In this section we argue that our procedure achieves minimax optimality over the set $K^{(s)}$ for some $s \in [p]$, assuming that the true $\beta \in K^{(s)}$, that the sample size is large enough.

We begin by a simple lemma relating the entropy numbers to a hard thresholding functional.

\begin{lemma}[Entropy lower bound from the Edmunds--Netrusov threshold functional]
\label{lem:EN-threshold-implies-tail}
Let $L\subset\mathbb R^p$ be a convex, centrally symmetric,
sign-symmetric, and permutation-invariant body.  For $1\le m\le p$, define
\begin{align*}
    a_m(L)
    :=
    \sup_{x\in L}
    \left(\sum_{i=m+1}^p (x_i^\ast)^2\right)^{1/2},
\end{align*}
where $x^\ast=(x_1^\ast,\ldots,x_p^\ast)$ denotes the non-increasing
rearrangement of $(|x_1|,\ldots,|x_p|)$.  Define also the thresholding
functional
\begin{align*}
    b_m(L)
    :=
    \sup_{x\in L}
    \left(
        \sum_{i=1}^p \min\{x_i^\ast,x_m^\ast\}^2
    \right)^{1/2}.
\end{align*}
The Edmunds--Netrusov \cite{edmunds1998entropy} entropy estimate gives, for some universal
constants $c_0,c_1>0$,
\begin{align*}
    e_{\lceil c_1 m\log(ep/m)\rceil}(L)
    \ge
    c_0\, b_m(L),
\end{align*}
so long as $c_1 m\log(ep/m) \leq p/2$.
Under the same condition we have
\begin{align*}
    e_{\lceil c_1 m\log(ep/m)\rceil}(L)
    \gtrsim
    a_m(L).
\end{align*}
\end{lemma}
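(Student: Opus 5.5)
The first inequality is the Edmunds--Netrusov entropy estimate, and I will take it as given from \cite{edmunds1998entropy}. The only thing left to prove is the second claim, $e_{\lceil c_1 m\log(ep/m)\rceil}(L)\gtrsim a_m(L)$. My plan is to show $a_m(L)\lesssim b_{m'}(L)$ for some $m'\asymp m$, and then invoke the first inequality at index $m'$, absorbing the change of index into the constants. The cleanest version is the pointwise inequality $a_m(L)\le b_m(L)$, which would give the claim with no change of index at all.

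\textbf{Step 1 (pointwise comparison).} Fix $x\in L$ and write $x^\ast$ for its non-increasing rearrangement. For every $i\ge m+1$ we have $x_i^\ast\le x_m^\ast$. Hence $\min\{x_i^\ast,x_m^\ast\}=x_i^\ast$ for $i>m$, and so
\begin{align*}
    \sum_{i=m+1}^p (x_i^\ast)^2
    =
    \sum_{i=m+1}^p \min\{x_i^\ast,x_m^\ast\}^2
    \le
    \sum_{i=1}^p \min\{x_i^\ast,x_m^\ast\}^2 .
\end{align*}
The inequality holds because the terms with $i\le m$ are nonnegative. Taking the supremum over $x\in L$ gives $a_m(L)\le b_m(L)$.

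\textbf{Step 2 (conclusion).} Combining Step 1 with the Edmunds--Netrusov bound, under the standing condition $c_1 m\log(ep/m)\le p/2$, gives
\begin{align*}
    e_{\lceil c_1 m\log(ep/m)\rceil}(L)\ge c_0 b_m(L)\ge c_0 a_m(L).
\end{align*}
This is the second claim.

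\textbf{Where the work is.} I expect the only real subtlety to be in the first inequality: checking that the version of the Edmunds--Netrusov theorem being cited applies to general symmetric convex bodies $L$ and uses the dyadic indexing $e_k$ defined in Section~\ref{defs:notation:sec}. The paper treats this as a black box, so I would cite it. Sign-symmetry and permutation-invariance of $L$ are what make the threshold functional meaningful, since they allow one to embed a scaled $\{0,\pm1\}$-pattern cube built on the $m$ largest coordinates, which is the mechanism behind that estimate. If the precise statement only gives the bound at a nearby index $m'$ with $m'\asymp m$, I would use monotonicity of $a_m$ in $m$ (it is non-increasing) together with $e_k$ non-increasing in $k$, and adjust $c_1$. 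The comparison in Step 1 itself is elementary.
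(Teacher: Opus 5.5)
Your proposal is correct and follows the paper's proof essentially verbatim: the paper also cites the Edmunds--Netrusov bound $e_{\lceil c_1 m\log(ep/m)\rceil}(L)\ge c_0\, b_m(L)$ without proof, and it derives the second claim from the pointwise fact $\min\{x_i^\ast,x_m^\ast\}=x_i^\ast$ for $i>m$, which gives $a_m(L)\le b_m(L)$. Your contingency plan of passing to a nearby index $m'\asymp m$ is therefore unnecessary.
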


\begin{proof}
Fix $x\in L$.  Since $x^\ast$ is non-increasing, for every $i>m$ we have
\begin{align*}
    x_i^\ast\le x_m^\ast.
\end{align*}
Therefore
\begin{align*}
    \min\{x_i^\ast,x_m^\ast\}=x_i^\ast,
    \qquad i=m+1,\ldots,p.
\end{align*}
Hence
\begin{align*}
    \sum_{i=1}^p \min\{x_i^\ast,x_m^\ast\}^2
    \ge
    \sum_{i=m+1}^p \min\{x_i^\ast,x_m^\ast\}^2
    =
    \sum_{i=m+1}^p (x_i^\ast)^2.
\end{align*}
Taking square roots gives
\begin{align*}
    \left(
        \sum_{i=1}^p \min\{x_i^\ast,x_m^\ast\}^2
    \right)^{1/2}
    \ge
    \left(
        \sum_{i=m+1}^p (x_i^\ast)^2
    \right)^{1/2}.
\end{align*}
Taking the supremum over $x\in L$, we obtain
\begin{align*}
    b_m(L)\ge a_m(L).
\end{align*}
Combining this deterministic comparison with the Edmunds--Netrusov estimate,
\begin{align*}
    e_{\lceil c_1 m\log(ep/m)\rceil}(L)
    \ge
    c_0 b_m(L)
    \ge
    c_0 a_m(L).
\end{align*}
This proves the claim.
\end{proof}

We continue with stating a Lemma which contains the crux of the minimax proof. It turns out that Sudakov minoration is tight for a regime of values over the set $K^{(s)}$.

\begin{lemma}[Sudakov tightness on a range of sparse entropy scales]
\label{lem:sudakov-tight-sparse-convexification}
Let $K\subset \mathbb R^p$ be a convex, centrally symmetric, sign-symmetric,
and permutation-invariant body, and for $1\le s\le p$ define
\begin{align*}
    K^{(s)}
    :=
    \operatorname{conv}\{x\in K:\|x\|_0\le s\}.
\end{align*}
Set $L:=K^{(s)}$.  For $1\le q\le p$, define the dual fundamental
function
\begin{align*}
    \phi_K(q)
    :=
    \left\|\sum_{i=1}^q e_i\right\|_{K^\circ}
    =
    \sup_{x\in K}\sum_{i=1}^q x_i .
\end{align*}
By Lemma \ref{lem:width-order-stat} we have
\begin{align*}
    w(L)
    \le
    A\,\phi_K(s)\sqrt{\log e p}
\end{align*}
for some constant $A\ge 1$, where
\begin{align*}
    w(L):=\mathbb E\sup_{x\in L}\langle g,x\rangle,
    \qquad g\sim N(0,I_p).
\end{align*}

Then for every $m$ satisfying
\begin{align*}
    s\le m\le p/c
\end{align*}
for some sufficiently large absolute constant $c$, if
\begin{align*}
    k_m:=\left\lceil c_1 m\log(ep/m)\right\rceil \leq p/2,
\end{align*}
then
\begin{align*}
    \sqrt{k_m}\,e_{k_m}(L) 
    \gtrsim_{A,c_0,c_1,c, }
    \frac{w(L)}{\sqrt{\log e p}}.
\end{align*}
On the other hand, Sudakov's minoration gives
\begin{align*}
    \sqrt{k}\,e_k(L)\lesssim w(L)
    \qquad\text{for all } k\ge 1.
\end{align*}
Consequently, for all such $m$,
\begin{align*}
    \sqrt{k_m}\,e_{k_m}(K^{(s)})
    \asymp_{A,c_0,c_1,c, \sqrt{\log e p}}
    w(K^{(s)}).
\end{align*}
\end{lemma}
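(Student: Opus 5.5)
Our plan is to bound $e_{k_m}(L)$ from below with Lemma~\ref{lem:EN-threshold-implies-tail}, by exhibiting a single point of $L$ whose thresholding functional is large. Fix $m$ with $s\le m\le p/c$ and $k_m\le p/2$. Lemma~\ref{lem:EN-threshold-implies-tail} applied to $L=K^{(s)}$ gives
\begin{align*}
    e_{k_m}(L)\ge c_0\, b_m(L),
\end{align*}
so it suffices to show that $b_m(L)\gtrsim \phi_K(s)/\sqrt{m}$ (up to the allowed factors).

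We would use a flat $s$-sparse test vector. Let $x^0\in K$ be a maximizer (or near-maximizer) in $\phi_K(s)=\sup_{x\in K}\sum_{i\le s}x_i$. Averaging $x^0$ over all permutations of its first $s$ coordinates and over sign flips of coordinates $s+1,\dots,p$ keeps it in $K$ by symmetry and convexity. The result $\bar x$ has first $s$ coordinates equal to $\phi_K(s)/s$ and remaining coordinates zero. Hence $u:=(\phi_K(s)/s)\sum_{i\le s}e_i\in K$, and since $u$ is $s$-sparse, $u\in L$.

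Next we evaluate $b_m$ at $u$. Since $m\ge s$, we have $u^\ast_m=\phi_K(s)/s$ if $m=s$ and $u^\ast_m=0$ if $m>s$, so using $u$ directly only works when $m=s$. This is the main obstacle: the test point must have at least $m$ nonzero coordinates yet still lie in $L$. We would instead use a spread-out average
\begin{align*}
    y:=\frac{1}{\binom{m}{s}}\sum_{|T|=s,\,T\subseteq[m]}\frac{\phi_K(s)}{s}\mathbf 1_T \in L,
\end{align*}
which is a convex combination of $s$-sparse points of $K$. The vector $y$ has $m$ equal coordinates of size $\frac{\phi_K(s)}{s}\cdot\frac{s}{m}=\phi_K(s)/m$. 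Then every $y^\ast_i=y^\ast_m$ for $i\le m$, so
\begin{align*}
    b_m(L)\ge \Big(m\cdot \tfrac{\phi_K(s)^2}{m^2}\Big)^{1/2}=\frac{\phi_K(s)}{\sqrt m}.
\end{align*}
Combining,
\begin{align*}
    \sqrt{k_m}\,e_{k_m}(L)\gtrsim \sqrt{c_1 m\log(ep/m)}\cdot\frac{c_0\phi_K(s)}{\sqrt m}\gtrsim \phi_K(s)\sqrt{\log(ep/m)}.
\end{align*}
Because $m\le p/c$, we have $\log(ep/m)\ge 1$, so this is at least a constant times $\phi_K(s)\ge w(L)/(A\sqrt{\log ep})$. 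This is the claimed lower bound, and $\log(ep/m)\gtrsim 1$ is exactly where the restriction $m\le p/c$ enters.

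For the upper bound, we would use the dual Sudakov minoration $\sqrt{\log N(\varepsilon,L)}\lesssim w(L)/\varepsilon$. Taking $\varepsilon=Cw(L)/\sqrt{k}$ gives $N(\varepsilon,L)\le 2^{k-1}$, hence $e_k(L)\lesssim w(L)/\sqrt k$. Combining the two directions gives the stated equivalence up to $\sqrt{\log ep}$ and the listed constants.
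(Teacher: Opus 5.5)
Your proof is correct and follows essentially the paper's route: you average permuted copies of an extremal $s$-sparse point for $\phi_K(s)$ to get a flat vector in $K^{(s)}$, apply the Edmunds--Netrusov bound of Lemma~\ref{lem:EN-threshold-implies-tail}, compare with $w(L)\le A\phi_K(s)\sqrt{\log ep}$, and use Sudakov minoration for the reverse direction. The differences are minor: you apply $b_m$ to an $m$-flat vector where the paper applies $a_m$ to a $2m$-flat one, which spares you the condition $2m\le p$; the covering bound you cite is ordinary Sudakov minoration rather than dual Sudakov; and $\log(ep/m)\ge 1$ holds for every $m\le p$, so that is not where $m\le p/c$ is needed.
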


\begin{proof} We start by arguing that by Lemma \ref{lem:width-order-stat} we have
\begin{align*}
    w(L)
    \le
    A\,\phi_K(s)\sqrt{\log e p}.
\end{align*}
Indeed since the dual norm $\|\cdot\|_{K^\circ}$ is symmetric and the vector $(\sqrt{\log ep},\sqrt{\log ep}, \ldots, \sqrt{\log ep}, 0, 0,\ldots,0)$ dominates coordinatewise the vector $(\sqrt{\log ep/1},\sqrt{\log ep/2}, \ldots, \sqrt{\log (ep/s)}, 0, 0,\ldots,0)$ the claim follows by Lemma \ref{lem:width-order-stat}.

We next show that sparse convexification contains flat vectors at all larger
scales.  By the definition of the polar norm,
\begin{align*}
    \phi_K(s)
    =
    \sup_{x\in K}\sum_{i=1}^{s}x_i .
\end{align*}
Since $K$ is sign-symmetric and convex, coordinate projections of points in
$K$ also belong to $K$.  Hence the supremum above may be taken over
$s$-sparse vectors.  Thus, up to an arbitrarily small approximation
error, there exists $y\in K$ with $\operatorname{supp}(y)\subseteq
\{1,\ldots,s\}$ such that
\begin{align*}
    \sum_{i=1}^{s}y_i
    \ge
    \frac12\,\phi_K(s).
\end{align*}
Changing signs if necessary, we may assume $y_i\ge 0$.  Since $y$ is $s$-sparse and $K$ is permutation-invariant, every coordinate permutation of $y$ belongs to $K^{(s)}$.

Fix $m\ge s$ with $2m\le p$, and let $R\subset\{1,\ldots,p\}$ have
cardinality $|R|=2m$.  Average all permutations of $y$ supported inside
$R$.  By convexity and permutation-invariance of $K^{(s)}$, the averaged
vector belongs to $K^{(s)}$.  This average is the flat vector
\begin{align*}
    z
    =
    \frac{1}{2m}\left(\sum_{i=1}^{s}y_i\right)\mathbf 1_R .
\end{align*}
Therefore
\begin{align*}
    z_i^\ast
    =
    \frac{1}{2m}\left(\sum_{i=1}^{s}y_i\right),
    \qquad i=1,\ldots,2m.
\end{align*}
Consequently,
\begin{align*}
    a_m(K^{(s)})
    \ge
    \left(\sum_{i=m+1}^{2m} (z_i^\ast)^2\right)^{1/2}
    =
    \sqrt m\,
    \frac{1}{2m}\left(\sum_{i=1}^{s}y_i\right)
    \gtrsim
    \frac{\phi_K(s)}{\sqrt m}.
\end{align*}

Now apply Lemma \ref{lem:EN-threshold-implies-tail}.  With
$k_m=\lceil c_1m\log(ep/m)\rceil$,
\begin{align*}
    e_{k_m}(K^{(s)})
    \ge
    c_0 a_m(K^{(s)})
    \gtrsim
    \frac{\phi_K(s)}{\sqrt m}.
\end{align*}
Multiplying by $\sqrt{k_m}$ gives
\begin{align*}
    \sqrt{k_m}\,e_{k_m}(K^{(s)})
    \gtrsim
    \phi_K(s)\sqrt{\log(ep/m)}.
\end{align*}
Thus dropping the $\log$ factor we have,
\begin{align*}
    \sqrt{k_m}\,e_{k_m}(K^{(s)})
    \gtrsim
    \phi_K(s).
\end{align*}
Using the inequality from Lemma \ref{lem:width-order-stat} we have
\begin{align*}
    w(K^{(s)})
    \le
    A\,\phi_K(s)\sqrt{\log(ep)},
\end{align*}
we obtain
\begin{align*}
    \sqrt{k_m}\,e_{k_m}(K^{(s)})
    \gtrsim_{A,c}
    \frac{w(K^{(s)})}{\sqrt{\log(ep)}}.
\end{align*}

The reverse inequality follows from Sudakov's minoration.  Indeed, for every
$k\ge 1$,
\begin{align*}
    \sqrt{k}\,e_k(K^{(s)})
    \lesssim
    w(K^{(s)}).
\end{align*}
Combining the two bounds yields
\begin{align*}
    \sqrt{k_m}\,e_{k_m}(K^{(s)})
    \asymp_{\sqrt{\log ep}}
    w(K^{(s)})
\end{align*}
for all admissible $m$. 
This proves the claim.
\end{proof}

\begin{theorem}
    Suppose that $N \gtrsim \bigg(\frac{\sigma s \log (ep/s) }{w(K^{(s)})}\bigg)^2 \vee \frac{(w(K^{(s)}))^2 \ell_s}{\sigma^2} \vee \ell_s$, $N \lesssim p$ and it is known that $\beta \in K^{(s)}$. Let further $s \leq c_0 p$ for some sufficienlty small $c_0 > 0$. It follows that the estimator we proposed is minimax optimal up to logarithmic factor: $\log^2 (e p)\sqrt{\ell_s}$. 
\end{theorem}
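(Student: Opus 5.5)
We need an upper bound from the oracle inequality and a minimax lower bound, and then we compare the two.

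\textbf{Upper bound.} Apply Theorem~\ref{thm:width-oracle} with $t=s$ and $x=\beta\in K^{(s)}$. The approximation term vanishes, so
\begin{align*}
\EE\|\widehat\beta-\beta\|_2^2\lesssim \sigma\frac{w_s\sqrt{\ell_s}}{\sqrt N}+\sigma\frac{w_s\ell_s}{N}+\frac{w_s^2\ell_s}{N}.
\end{align*}
The hypothesis $N\gtrsim \ell_s$ lets $\sigma w_s\ell_s/N$ be absorbed into $\sigma w_s\sqrt{\ell_s}/\sqrt N$. The hypothesis $N\gtrsim w_s^2\ell_s/\sigma^2$ gives $w_s\sqrt{\ell_s}/\sqrt N\lesssim\sigma$, so $w_s^2\ell_s/N\lesssim \sigma w_s\sqrt{\ell_s}/\sqrt N$. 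Hence the upper bound is of order $\sigma w_s\sqrt{\ell_s/N}$.

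\textbf{Lower bound.} We use a standard Fano/local packing argument for random-design regression. Over a $\delta$-separated packing of $K^{(s)}$ of size $M$, the pairwise KL divergence between Gaussian-noise models is $N\,\|\Sigma^{1/2}(\beta-\beta')\|^2/(2\sigma^2)\lesssim N\lambda_+ \epsilon^2/\sigma^2$ if the packing lies in a ball of radius $\epsilon$. Rather than local entropy, we use the global Yang--Barron form: the minimax risk is $\gtrsim \epsilon^2$ whenever $\log N(\epsilon,K^{(s)})\gtrsim N\epsilon^2/\sigma^2$. We choose $\epsilon=e_{k_m}(K^{(s)})$ at an admissible scale $m\in[s,p/c]$. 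Then $\log N(\epsilon)\asymp k_m$, and the condition becomes $k_m\gtrsim N e_{k_m}^2/\sigma^2$. By Lemma~\ref{lem:sudakov-tight-sparse-convexification},
\begin{align*}
e_{k_m}\gtrsim \frac{w_s}{\sqrt{k_m\log(ep)}},
\end{align*}
and Sudakov gives $e_{k_m}\lesssim w_s/\sqrt{k_m}$. Balancing $k_m\approx \sqrt N w_s/\sigma$ up to logs, one needs $k_m$ to fall in the admissible window $[c_1 s\log(ep/s),\,p/2]$. The lower end is guaranteed by $N\gtrsim(\sigma s\log(ep/s)/w_s)^2$, and the upper end by $N\lesssim p$ together with $w_s\lesssim \sigma\sqrt N$ (from $N\gtrsim w_s^2\ell_s/\sigma^2$). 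The condition $s\le c_0p$ ensures the window is nonempty. Since $k_m$ only takes values on the grid $m\mapsto \lceil c_1 m\log(ep/m)\rceil$, consecutive values differ by a constant factor times $\log(ep)$, and monotonicity of $e_k$ handles the rounding. Plugging in gives
\begin{align*}
\inf_{\tilde\beta}\sup_{\beta\in K^{(s)}}\EE\|\tilde\beta-\beta\|^2\gtrsim e_{k_m}^2\gtrsim \frac{w_s^2}{k_m\log(ep)}\gtrsim \frac{\sigma w_s}{\sqrt N\,\log^{c}(ep)}.
\end{align*}

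\textbf{Comparison.} Dividing the upper bound by the lower bound leaves a factor $\sqrt{\ell_s}$ times a power of $\log(ep)$. Tracking the $\log(ep)$ losses carefully yields the stated factor $\log^2(ep)\sqrt{\ell_s}$: one from Lemma~\ref{lem:sudakov-tight-sparse-convexification} and one from the rounding of $k_m$ and the ratio between global entropy and the packing bound.

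\textbf{Main obstacle.} The hardest step is making the Yang--Barron/Fano step rigorous with sub-Gaussian rather than Gaussian noise. I would prove the lower bound for Gaussian noise and Gaussian design, which lies in the model class, so that the KL computation is exact. The other delicate point is verifying that the balancing $k$ lands in the range where Lemma~\ref{lem:sudakov-tight-sparse-convexification} applies. I would handle this by defining $m^\*$ explicitly as the smallest $m\ge s$ with $k_m\ge \sqrt N w_s/\sigma$ and checking both endpoints against the sample-size hypotheses.
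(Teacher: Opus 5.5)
Your upper bound, and your check that the balancing index $k_m\asymp\sqrt N w_s/\sigma$ lands in the admissible window, match the paper. The lower bound, however, rests on a principle that is false as stated: ``minimax risk $\gtrsim\epsilon^2$ whenever $\log N(\epsilon,K^{(s)})\gtrsim N\epsilon^2/\sigma^2$.'' For the Euclidean ball $RB_2^d$ with $N\gtrsim d$, this would give a lower bound of order $(d\sigma^2/N)\log(R^2N/(d\sigma^2))$. Least squares attains $d\sigma^2/N$, so the principle overshoots by an unbounded factor. Global-entropy Fano (Yang--Barron) needs two scales. With $M(\epsilon)$ the packing number, it gives risk $\gtrsim\epsilon^2$ provided $\log M(\epsilon)\ge 2\{\log N(\eta,K^{(s)})+N\lambda_+\eta^2/(2\sigma^2)\}+2\log 2$ for some $\eta$, because the mutual information is at most $\log N(\eta,K^{(s)})+N\lambda_+\eta^2/(2\sigma^2)$.

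Your ingredients do close this. Take $\epsilon$ just below $e_{k_m}$, so $\log M(\epsilon)\ge(k_m-1)\log 2$. Take $\eta$ just above $e_{\lfloor k_m/8\rfloor}(K^{(s)})\lesssim w_s/\sqrt{k_m}$ (Sudakov), so $\log N(\eta)\le(k_m/8)\log 2$ and $N\lambda_+\eta^2/\sigma^2\lesssim Nw_s^2/(\sigma^2k_m)\le c\,k_m$ once $k_m\ge C\sqrt N w_s/\sigma$. Lemma~\ref{lem:sudakov-tight-sparse-convexification} then gives risk $\gtrsim w_s^2/(k_m\log(ep))\asymp\sigma w_s/(\sqrt N\log(ep))$. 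Rounding is harmless since $k_{m+1}\le 2k_m$.

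Repaired this way, your route is a genuinely different, self-contained proof. The paper never runs Fano. It imports the characterization of \cite{prasadan2025characterizingminimaxratenonparametric} via Lemma~2.6 of \cite{neykov2026fast}, i.e.\ the lower bound $\bigl[\inf_k\{e_k(K^{(s)})^2+k\sigma^2/N\}\bigr]\wedge p\sigma^2/N\wedge d_s^2$ up to a factor $1+\log_+(d_s\sqrt N/\sigma)\lesssim\log(ep)$. It then splits $k\le k_m$ (monotonicity of $e_k$ plus Lemma~\ref{lem:sudakov-tight-sparse-convexification}) from $k\ge k_m$ (the $k\sigma^2/N$ term), and checks the two truncations separately.

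That route uses only the entropy lower bound at the single index $k_m$. In yours, the Sudakov upper bound is indispensable: it controls the mutual information, playing the role the cited local-to-global lemma plays at a logarithmic cost. Your attribution of the second $\log(ep)$ is therefore off. Rounding and packing-versus-covering cost only constants, so your two-scale argument loses a single $\log(ep)$. It yields the gap $\log(ep)\sqrt{\ell_s}$, which is sharper than, and implies, the stated $\log^2(ep)\sqrt{\ell_s}$. The paper's extra logarithm is exactly the $1+\log_+(d_s\sqrt N/\sigma)$ conversion factor. In exchange, the paper's argument is modular and needs no explicit information-theoretic computation.
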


\begin{remark} Whenever $N \gtrsim p$ for a sufficiently large constant, one can implement a minimax optimal estimator over the symmetric set $K^{(s)}$ in the way proposed in Section 3 of \cite{neykov2026fast}. 
\end{remark}

\begin{proof}
Write
\begin{align*}
    w_s:=w(K^{(s)}),
    \qquad
    h_s:=s\log(ep/s).
\end{align*}
By the minimax lower bound of
\cite{prasadan2025characterizingminimaxratenonparametric} (which notably does not place any assumption on $\sigma$ (unlike their upper bound which has to assume $\sigma$ is a fixed constant)), together with
Lemma 2.6 of \cite{neykov2026fast}, the minimax risk over $K^{(s)}$ is
bounded from below, up to logarithmic terms (i.e. $1 + \log_+ d_s \sqrt{N}/\sigma)$ which comes from an application of Lemma 2.6 of \cite{neykov2026fast}), by
\begin{align*}
    \left[
    \inf_{1\le k\le \lfloor p/2\rfloor-1}
    \left\{
        e_k(K^{(s)})^2+\frac{k\sigma^2}{N}
    \right\}
    \right]
    \wedge
    \frac{p\sigma^2}{N}
    \wedge
    d_s^2 .
\end{align*}

We first lower-bound the infimum term. Define the active entropy scale
\begin{align*}
    \kappa_s:=\frac{w_s\sqrt N}{\sigma},
\end{align*}
The assumption
\begin{align*}
    N\gtrsim
    \left(
        \frac{h_s\sigma}{w_s}
    \right)^2
\end{align*}
implies
\begin{align*}
    \kappa_s\gtrsim h_s.
\end{align*}
Using $\sigma \gtrsim \frac{w_s \sqrt{\ell_s}}{\sqrt{N}}$ gives $w_s \lesssim \sigma \frac{\sqrt{N}}{\sqrt{\ell_s}} \lesssim \sigma \sqrt{N} \lesssim \sigma \sqrt{p}$.
Hence we have $w_s \lesssim \sigma \sqrt{p}$.

Hence, since $N\lesssim p$,
\begin{align*}
    \kappa_s
    =
    \frac{w_s\sqrt N}{\sigma}
    \lesssim p.
\end{align*}
Thus $\kappa_s$ lies in the admissible entropy range.

Choose $m\ge s$ such that
\begin{align*}
    k_m:=\lceil c_1m\log(ep/m)\rceil
    \asymp
    \kappa_s .
\end{align*}
By Lemma \ref{lem:sudakov-tight-sparse-convexification},
\begin{align*}
    \sqrt{k_m}\,e_{k_m}(K^{(s)})
    \gtrsim
    \frac{w_s}{\sqrt{\log(ep)}}.
\end{align*}
Therefore
\begin{align*}
    e_{k_m}(K^{(s)})^2
    \gtrsim
    \frac{w_s^2}{k_m\log(ep)}
    \asymp
    \frac{\sigma w_s}{\sqrt N\,\log(ep)}.
\end{align*}

Now let
\begin{align*}
    F(k):=e_k(K^{(s)})^2+\frac{k\sigma^2}{N}.
\end{align*}
If $k\le k_m$, then, since $e_k(K^{(s)})$ is non-increasing in $k$,
\begin{align*}
    F(k)
    \ge
    e_k(K^{(s)})^2
    \ge
    e_{k_m}(K^{(s)})^2
    \gtrsim
    \frac{\sigma w_s}{\sqrt N\,\log(ep)}.
\end{align*}
If $k\ge k_m$, then
\begin{align*}
    F(k)
    \ge
    \frac{k\sigma^2}{N}
    \ge
    \frac{k_m\sigma^2}{N}
    \asymp
    \frac{\sigma w_s}{\sqrt N}.
\end{align*}
Combining the two cases yields
\begin{align*}
    \inf_{1\le k\le \lfloor p/2\rfloor-1}
    \left\{
        e_k(K^{(s)})^2+\frac{k\sigma^2}{N}
    \right\}
    \gtrsim
    \frac{\sigma w_s}{\sqrt N\,\log(ep)}.
\end{align*}

It remains to check that the truncation terms do not reduce this lower
bound. First,
\begin{align*}
    \frac{p\sigma^2}{N}
    \gtrsim
    \frac{\sigma w_s}{\sqrt N}
\end{align*}
is equivalent to
\begin{align*}
    \frac{w_s\sqrt N}{\sigma}\lesssim p,
\end{align*}
which was verified above. Second, by the lower active-scale assumption,
\begin{align*}
    \frac{\sigma w_s}{\sqrt N}
    \lesssim
    \frac{w_s^2}{h_s}
    \lesssim
    d_s^2,
\end{align*}
where the last inequality follows from
$w_s\lesssim d_s\sqrt{h_s}$. Therefore
\begin{align*}
    d_s^2
    \gtrsim
    \frac{\sigma w_s}{\sqrt N}.
\end{align*}
Thus the minimax risk over $K^{(s)}$ is bounded from below by
\begin{align*}
    \frac{\sigma w_s}{\sqrt N\,\log(ep)}.
\end{align*}

On the other hand, the oracle inequality for the estimator gives, for
$\beta\in K^{(s)}$,
\begin{align*}
    \mathbb E_\beta\|\widehat\beta-\beta\|_2^2
    \lesssim
    \frac{\sigma w_s\sqrt{\ell_s}}{\sqrt N}
    +
    \frac{\sigma w_s\ell_s}{N}+
    \frac{w_s^2\ell_s}{N}.
\end{align*}

Since
\begin{align*}
    \frac{w_s \sqrt{\ell_s}}{\sqrt{N}} \lesssim \sigma,
\end{align*}
we have
\begin{align*}
    \frac{w_s^2\ell_s}{N}
    \lesssim
    \frac{\sigma w_s\sqrt{\ell_s}}{\sqrt N}.
\end{align*}
Furthermore 
\begin{align*}
    \frac{w_s \ell_s}{N} \lesssim \frac{w_s \sqrt{\ell_s}}{\sqrt{N}},
\end{align*}
as $N \gtrsim \ell_s$.
Hence
\begin{align*}
    \mathbb E_\beta\|\widehat\beta-\beta\|_2^2
    \lesssim
    \frac{\sigma w_s\sqrt{\ell_s}}{\sqrt N}.
\end{align*}
Comparing the upper and lower bounds proves minimax optimality over
$K^{(s)}$ up to the logarithmic factor $\log^2(ep)\sqrt{\ell_s}$. The square in the $\log(ep)$ comes in since $ d_s \sqrt{N}/\sigma \lesssim w_s \sqrt{N}/\sigma \lesssim N/\sqrt{\ell_s} \lesssim p$ since $N \lesssim p$ (recall here that we had a term $1 + \log_+ (d_s \sqrt{N}/\sigma)$ from an application of the lower bound of \cite{prasadan2025characterizingminimaxratenonparametric}).
\end{proof}

\section{Discussion}\label{discussion:section}

This paper proposes a different way of thinking about sparse estimation under high-dimensional convex constraints.  Instead of beginning with a particular sparsity-inducing penalty, such as the $\ell_1$-norm, we start from an arbitrary sign-symmetric and permutation-invariant convex body $K$, and build from it the sparse convexification hierarchy
\begin{align*}
    K^{(s)}
    =
    \operatorname{conv}\{v\in K:\|v\|_0\le s\}.
\end{align*}
The resulting estimator searches over this hierarchy and adapts to the best sparse convex approximation of the target.  Thus sparsity enters not through a fixed regularizer, but through a sequence of convex relaxations of the original constraint.  This perspective gives a general oracle inequality for a broad class of symmetric norm constraints, assuming only oracle access to the Minkowski functional of $K$.

We showed near minimax optimality of our procedure for vectors $\beta \in K^{(s)}$ which includes $s$-sparse vectors for some sufficiently small $s$ compared to the sample size. It will be interesting to develop a regularized version of our procedure.

\bibliographystyle{abbrv}
\bibliography{polytime}

\newpage 

\appendix

\section{Supplemental Proofs}

We start by showing a lemma in greater generality than required. Turns out that one can compute the diameter $d_s$ of the set $K^{(s)}$ up to a logarithmic factor. This implies that one can also nearly compute the best  (i.e. smallest) $R$ such that $K \subseteq R B_2^p$ (upon applying the lemma to the set $K^{(p)} = K$). This lemma will in fact not be used, except to argue that $R$ can be approximated well up to a logarithmic loss.

\begin{lemma}[Sparse diameter via the fundamental function]\label{lem:sparse-diameter-fundamental}
Let $K\subseteq \mathbb R^p$ be a centrally symmetric convex body whose gauge
$\|\cdot\|_K$ is sign-invariant and permutation-invariant. For
$1\le s\le p$, define
\begin{align*}
    K^{(s)}
    :=
    \operatorname{conv}\{x\in K:\|x\|_0\le s\}.
\end{align*}
Let
\begin{align*}
    \rho_s
    :=
    \sup_{x\in K^{(s)}}\|x\|_2,
    \qquad
    d_s
    :=
    \operatorname{diam}_2(K^{(s)}).
\end{align*}
Also define the fundamental function
\begin{align*}
    \varphi_K(k)
    :=
    \left\|\sum_{i=1}^k e_i\right\|_K,
    \qquad 1\le k\le p,
\end{align*}
and
\begin{align*}
    r_s
    :=
    2\max_{1\le k\le s}
    \frac{\sqrt{k}}{\varphi_K(k)}.
\end{align*}
Then
\begin{align*}
    r_s/2
    \le
    \rho_s
    \le
    \sqrt{1+\lfloor \log_2 s\rfloor}\, r_s/2 .
\end{align*}
Equivalently, since $K^{(s)}$ is centrally symmetric and hence
$d_s=2\rho_s$,
\begin{align*}
    r_s
    \le
    d_s
    \le
    \sqrt{1+\lfloor \log_2 s\rfloor}\, r_s .
\end{align*}

Define $\tilde d_s = 2 R \wedge (r_s  \sqrt{1+\lfloor \log_2 s\rfloor})$.
\end{lemma}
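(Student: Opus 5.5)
We must show $r_s/2\le\rho_s\le\sqrt{1+\lfloor\log_2 s\rfloor}\,r_s/2$. Since $\|\cdot\|_2$ is convex, the supremum over the convex hull $K^{(s)}$ is attained on the generating set $S_s=\{x\in K:\|x\|_0\le s\}$. So $\rho_s=\sup\{\|x\|_2: x\in K,\ \|x\|_0\le s\}$, and both inequalities reduce to statements about $s$-sparse points of $K$.

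\emph{Lower bound.} Fix $k\le s$. The vector $u_k:=\mathbf 1_{[k]}/\varphi_K(k)$ has $\|u_k\|_K=1$, so $u_k\in K$ (closedness of $K$). It is $k$-sparse, hence $u_k\in S_s$. Its Euclidean norm is $\sqrt k/\varphi_K(k)$. Maximizing over $k\le s$ gives $\rho_s\ge \max_{k\le s}\sqrt k/\varphi_K(k)=r_s/2$.

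\emph{Upper bound.} Let $x\in K$ be $s$-sparse. By sign and permutation invariance we may take $x=x^\ast$: non-negative, non-increasing, supported on $[s]$. Group the coordinates into dyadic blocks $B_j=\{2^j,\dots,2^{j+1}-1\}\cap[s]$ for $j=0,\dots,J$, where $J=\lfloor\log_2 s\rfloor$. There are $J+1$ blocks. On $B_j$ every coordinate is at most $x_{2^j}$ and $|B_j|\le 2^j$, so
\[
\|x_{B_j}\|_2^2\le 2^j x_{2^j}^2 .
\]
The key step is $x_k\le 1/\varphi_K(k)$ for every $k$. Since $x$ is non-increasing and nonnegative, $x\ge x_k\mathbf 1_{[k]}$ coordinatewise. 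For a sign- and permutation-invariant norm (a symmetric, hence absolute and monotone, norm), coordinatewise domination of absolute values gives domination of norms. This monotonicity follows from sign invariance and convexity: averaging over sign flips shows that shrinking one coordinate toward $0$ does not increase the norm. Hence $x_k\varphi_K(k)=\|x_k\mathbf 1_{[k]}\|_K\le\|x\|_K\le1$. With $k=2^j$ this gives
\[
\|x_{B_j}\|_2^2\le \frac{2^j}{\varphi_K(2^j)^2}\le (r_s/2)^2 ,
\]
because $2^j\le s$. Summing over the $J+1$ blocks yields $\|x\|_2^2\le(1+\lfloor\log_2 s\rfloor)(r_s/2)^2$. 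Taking the supremum over $x$ gives the upper bound.

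The diameter statement follows from $d_s=2\rho_s$, which holds because $K^{(s)}$ is centrally symmetric: $x,-x\in K^{(s)}$ realizes $2\rho_s$, and the triangle inequality bounds $d_s$ above by $2\rho_s$. The definition of $\tilde d_s$ is just notation and requires no proof. The one step needing care is the monotonicity of the symmetric norm under coordinatewise domination. I will prove it via the averaging argument: writing $(t x_1, x_2,\dots)$ with $|t|\le1$ as a convex combination of $x$ and its sign-flip in the first coordinate gives the one-coordinate case, and iterating over coordinates gives the general case. Everything else is routine.
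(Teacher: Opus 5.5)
Your proposal is correct and follows the paper's proof essentially step for step. The shared steps are the reduction of $\rho_s$ to $s$-sparse points of $K$ via convexity of $\|\cdot\|_2$, the flat vectors $\mathbf 1_{[k]}/\varphi_K(k)$ for the lower bound, and the bound $x_k\le 1/\varphi_K(k)$ summed over the $1+\lfloor\log_2 s\rfloor$ dyadic blocks for the upper bound. Your sign-flip averaging argument for monotonicity under coordinatewise domination fills in a step the paper only asserts by appeal to unconditionality.
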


\begin{proof}
Since $\|\cdot\|_2$ is convex and $K^{(s)}$ is the convex hull of the
$s$-sparse points of $K$, we have
\begin{align*}
    \rho_s
    =
    \sup_{x\in K^{(s)}}\|x\|_2
    =
    \sup_{\substack{x\in K\\ \|x\|_0\le s}}\|x\|_2 .
\end{align*}

We first prove the lower bound. Fix $1\le k\le s$. By definition of
$\varphi_K(k)$, the vector
\begin{align*}
    x^{(k)}
    :=
    \frac{1}{\varphi_K(k)}
    \sum_{i=1}^k e_i
\end{align*}
belongs to $K$ and is $k$-sparse. Hence $x^{(k)}\in K^{(s)}$, and
\begin{align*}
    \rho_s
    \ge
    \|x^{(k)}\|_2
    =
    \frac{\sqrt{k}}{\varphi_K(k)}.
\end{align*}
Taking the maximum over $1\le k\le s$ gives
\begin{align*}
    \rho_s\ge r_s/2.
\end{align*}

We now prove the upper bound. Let $x\in K$ with $\|x\|_0\le s$. By sign
and permutation invariance, we may assume without loss of generality that
\begin{align*}
    x_1\ge x_2\ge \cdots \ge x_s\ge 0,
    \qquad
    x_{s+1}=\cdots=x_p=0.
\end{align*}
For every $1\le k\le s$, the vector
\begin{align*}
    x_k\sum_{i=1}^k e_i
\end{align*}
is coordinatewise dominated by $x$. Since $\|\cdot\|_K$ is unconditional,
it is monotone with respect to coordinatewise domination. Therefore
\begin{align*}
    x_k \varphi_K(k)
    =
    \left\|x_k\sum_{i=1}^k e_i\right\|_K
    \le
    \|x\|_K
    \le 1.
\end{align*}
Thus
\begin{align*}
    x_k
    \le
    \frac{1}{\varphi_K(k)}
    \qquad
    \text{for every }1\le k\le s.
\end{align*}

Let $m=\lfloor \log_2 s\rfloor$. Decompose $\{1,\dots,s\}$ into dyadic
blocks
\begin{align*}
    B_j
    :=
    \{2^j,2^j+1,\dots,\min(2^{j+1}-1,s)\},
    \qquad j=0,\dots,m.
\end{align*}
Since $x_1\ge \cdots \ge x_s\ge 0$, for every $i\in B_j$,
\begin{align*}
    x_i\le x_{2^j}.
\end{align*}
Also $|B_j|\le 2^j$. Hence
\begin{align*}
    \sum_{i\in B_j}x_i^2
    \le
    2^j x_{2^j}^2.
\end{align*}
Summing over the dyadic blocks gives
\begin{align*}
    \|x\|_2^2
    =
    \sum_{i=1}^s x_i^2
    \le
    \sum_{j=0}^{m} 2^j x_{2^j}^2.
\end{align*}
Using the previous bound with $k=2^j$, we obtain
\begin{align*}
    2^j x_{2^j}^2
    \le
    \frac{2^j}{\varphi_K(2^j)^2}
    \le
    (r_s/2)^2.
\end{align*}
Therefore
\begin{align*}
    \|x\|_2^2
    \le
    \sum_{j=0}^{m} (r_s/2)^2
    =
    \bigl(1+\lfloor \log_2 s\rfloor\bigr)(r_s/2)^2.
\end{align*}
Taking the supremum over all $x\in K$ with $\|x\|_0\le s$, we get
\begin{align*}
    \rho_s
    \le
    \sqrt{1+\lfloor \log_2 s\rfloor}\,r_s/2.
\end{align*}

Finally, since $K^{(s)}$ is centrally symmetric,
\begin{align*}
    d_s
    =
    \operatorname{diam}_2(K^{(s)})
    =
    2\sup_{x\in K^{(s)}}\|x\|_2
    =
    2\rho_s.
\end{align*}
This proves the claimed bounds for $d_s$.
\end{proof}

\begin{proof}[Proof of Lemma \ref{lem:width-order-stat}]
By the support-function identity for $K^{(s)}$,
\begin{align*}
    h_{K^{(s)}}(g)
    =
    \max_{|T|\le s}\|g_T\|_{K^\circ}.
\end{align*}
Since $K^\circ$ is also sign-invariant and permutation-invariant, the
maximum is attained by taking $T$ to be the indices of the $s$ largest
coordinates of $g$ in absolute value. Hence
\begin{align*}
    h_{K^{(s)}}(g)
    =
    \left\|
    (g_1^*,\ldots,g_s^*,0,\ldots,0)
    \right\|_{K^\circ}.
\end{align*}
Taking expectations gives the identity for the Gaussian width.

We use the following standard bound for Gaussian order statistics.  There is a
universal constant $C>0$ such that, for every $u\ge 0$, with probability at
least $1-e^{-u}$,
\begin{align*}
    g_j^*
    \le
    C\left(\sqrt{\log(ep/j)}+\sqrt u\right),
    \qquad 1\le j\le s .
\end{align*}
Indeed, fix $1\le j\le s$ and set
\begin{align*}
    t_j := A\left(\sqrt{\log(ep/j)}+\sqrt u\right),
\end{align*}
where $A>0$ is a sufficiently large numerical constant.  Since
$\{g_j^*>t_j\}$ implies that at least $j$ coordinates satisfy
$|g_i|>t_j$, the Gaussian tail bound gives
\begin{align*}
\begin{aligned}
    \mathbb P(g_j^*>t_j)
    &\le
    {p\choose j}\mathbb P(|g_1|>t_j)^j  \\
    &\le
    \left(\frac{ep}{j}\right)^j
    \left(2e^{-t_j^2/2}\right)^j .
\end{aligned}
\end{align*}
Writing $L_j=\log(ep/j)$, and using
\begin{align*}
    t_j^2
    \ge
    A^2(L_j+u),
\end{align*}
we obtain
\begin{align*}
    \mathbb P(g_j^*>t_j)
    \le
    \left[
        2\exp\{(1-A^2/2)L_j\}
        \exp\{-A^2u/2\}
    \right]^j .
\end{align*}
Since $L_j\ge 1$, choosing $A$ large enough yields
\begin{align*}
    \mathbb P(g_j^*>t_j)
    \le
    e^{-2j}e^{-4uj}.
\end{align*}
Therefore,
\begin{align*}
    \mathbb P\left(
        \exists\,1\le j\le s:
        g_j^*>A\left(\sqrt{\log(ep/j)}+\sqrt u\right)
    \right)
    \le
    \sum_{j=1}^s e^{-2j}e^{-4uj}
    \le
    e^{-u}.
\end{align*}
This proves the claimed simultaneous bound.
Therefore, by monotonicity of the symmetric norm,
\begin{align*}
    \left\|
    (g_1^*,\ldots,g_s^*,0,\ldots,0)
    \right\|_{K^\circ}
    \le
    C\|\gamma^{(s)}\|_{K^\circ}
    +
    C\sqrt u\,\left\|\sum_{j=1}^s e_j\right\|_{K^\circ}.
\end{align*}
Since $\sqrt{\log(ep/j)}\ge 1$ for $j\le s$, we have
\begin{align*}
    \left\|\sum_{j=1}^s e_j\right\|_{K^\circ}
    \le
    \|\gamma^{(s)}\|_{K^\circ}.
\end{align*}
Integrating the tail bound in $u$ gives
\begin{align*}
    w(K^{(s)})
    \le
    C\|\gamma^{(s)}\|_{K^\circ}.
\end{align*}

\end{proof}

\begin{proof}[Proof of Lemma \ref{lem:matching-lower-bound-gaussian-width}]
Let
\begin{align*}
    X^{(s)} := (g_1^*,\ldots,g_s^*,0,\ldots,0).
\end{align*}
By the support-function identity,
\begin{align*}
    w(K^{(s)})
    =
    \mathbb E\|X^{(s)}\|_{K^\circ}.
\end{align*}

We first record a lower bound on partial sums of Gaussian order statistics.
For $1\le k\le p$, set
\begin{align*}
    S_k := \sum_{j=1}^k g_j^*,
    \qquad
    \Gamma_k := \sum_{j=1}^k \sqrt{\log(ep/j)}.
\end{align*}
We claim that
\begin{align*}
    \mathbb E S_k \ge c \Gamma_k ,
    \qquad 1\le k\le p.
\end{align*}

Indeed, first note that
\begin{align*}
    \Gamma_k
    \le
    C k\sqrt{\log(ep/k)} .
\end{align*}
This follows from concavity of $x\mapsto \sqrt{x}$ and the bound
\begin{align*}
    \frac1k\sum_{j=1}^k \log(ep/j)
    =
    \log(ep)-\frac1k\log(k!)
    \le
    \log(ep/k)+1
    \le
    2\log(ep/k),
\end{align*}
which is true by standard integration, i.e., $\log k! \geq \int_{1}^k \log x dx = k \log k - k + 1$.

Now put
\begin{align*}
    t := a\sqrt{\log(ep/k)}
\end{align*}
for a sufficiently small universal constant $a>0$, and define
\begin{align*}
    N_t := \#\{1\le i\le p: |g_i|\ge t\}.
\end{align*}
Then
\begin{align*}
    S_k \ge t\,(N_t\wedge k).
\end{align*}
Let $q_t:=\mathbb P(|g_1|\ge t)$.  The standard Gaussian lower tail bound
\begin{align*}
    \mathbb P(|g_1|\ge t)
    \ge
    \frac{c}{1+t^2}e^{-t^2/2}
\end{align*}
implies, for $a>0$ small enough, that
\begin{align*}
    p q_t \ge \eta k
\end{align*}
for a universal constant $\eta>0$.  Hence
\begin{align*}
    \mathbb E N_t = p q_t \ge \eta k.
\end{align*}
By Paley--Zygmund, since $N_t$ is binomial,
\begin{align*}
    \mathbb P\left(N_t\ge \frac12\mathbb E N_t\right)
    \ge c_\eta
\end{align*}
for a universal constant $c_\eta>0$.  Therefore
\begin{align*}
    \mathbb E(N_t\wedge k) \ge \mathbb E((N_t\wedge k)\mathbbm{1}(N_t \geq \EE N_t/2)) 
    \ge (\EE N_t/2\wedge k )\PP(N_t \geq \EE N_t/2) \geq
    c k.
\end{align*}
Consequently,
\begin{align*}
    \mathbb E S_k
    \ge
    c k \sqrt{\log(ep/k)}
    \ge
    c \Gamma_k .
\end{align*}

Now let
\begin{align*}
    m := \mathbb E X^{(s)}
    =
    \left(
    \mathbb E g_1^*,\ldots,\mathbb E g_s^*,0,\ldots,0
    \right).
\end{align*}
The previous partial-sum estimate gives, for every $1\le k\le p$,
\begin{align*}
    \sum_{j=1}^k m_j
    \ge
    c\sum_{j=1}^k \gamma^{(s)}_j .
\end{align*}
Thus $c\gamma^{(s)}$ is weakly submajorized by $m$.  Since
$\|\cdot\|_{K^\circ}$ is sign-invariant and permutation-invariant, the
Ky Fan dominance principle for symmetric norms gives
\begin{align*}
    \|m\|_{K^\circ}
    \ge
    c\|\gamma^{(s)}\|_{K^\circ}.
\end{align*}
Finally, by Jensen's inequality,
\begin{align*}
    w(K^{(s)})
    =
    \mathbb E\|X^{(s)}\|_{K^\circ}
    \ge
    \|\mathbb E X^{(s)}\|_{K^\circ}
    =
    \|m\|_{K^\circ}.
\end{align*}
Combining the last two displays yields
\begin{align*}
    w(K^{(s)})
    \ge
    c\|\gamma^{(s)}\|_{K^\circ}.
\end{align*}
\end{proof}

\begin{proof}[Proof of Lemma \ref{lem:weak-membership-Ks-polar}]
Let
\begin{align*}
    A := (K^{(s)})^\circ,
    \qquad
    D := K^\circ,
\end{align*}
and set
\begin{align*}
    T=T_s(z),
    \qquad
    y=z_T,
\end{align*}
By Lemma~\ref{lem:Ks-polar},
\begin{align*}
    z\in A
    \quad\Longleftrightarrow\quad
    z_{T_s(z)}\in D .
\end{align*}
Thus, if $z\in A$, then $y=z_{T_s(z)}\in D$, and the weak membership
oracle for $D$ returns $\textnormal{YES}$.

It remains to prove the contrapositive of the $\textnormal{NO}$ guarantee.
Assume that
\begin{align*}
    \operatorname{dist}(y,D)\le \delta .
\end{align*}
Choose $y_0\in D$ such that
\begin{align*}
    \|y-y_0\|_2\le \delta .
\end{align*}
Since
\begin{align*}
    \frac1R B_2^p\subseteq D=K^\circ,
\end{align*}
the Minkowski functional of $D$ satisfies
\begin{align*}
    \|u\|_D\le R\|u\|_2
    \qquad\text{for all }u\in\mathbb R^p.
\end{align*}
Therefore
\begin{align*}
    \|y\|_D
    \le
    \|y_0\|_D+\|y-y_0\|_D
    \le
    1+R\delta .
\end{align*}
Using again the identity from Lemma~\ref{lem:Ks-polar},
\begin{align*}
    \|z\|_A
    =
    \|z_{T_s(z)}\|_{K^\circ}
    =
    \|y\|_D
    \le
    1+R\delta .
\end{align*}
Let
\begin{align*}
    \alpha := 1+R\delta .
\end{align*}
Then $z/\alpha\in A$. Moreover, by Lemma~\ref{lem:Ks-polar},
\begin{align*}
    A=(K^{(s)})^\circ
    \subseteq
    b_s B_2^p,
    \qquad
    b_s=\frac1r\sqrt{\frac{p}{s}} .
\end{align*}
Since $z/\alpha\in A$, we have
\begin{align*}
    \|z\|_2\le \alpha b_s .
\end{align*}
Hence
\begin{align*}
    \operatorname{dist}(z,A)
    \le
    \left\|z-\frac{z}{\alpha}\right\|_2
    =
    \left(1-\frac1\alpha\right)\|z\|_2
    \le
    (\alpha-1)b_s
    =
    R\delta b_s
    =
    \varepsilon .
\end{align*}
Thus, if $\operatorname{dist}(z,A)>\varepsilon$, then necessarily
\begin{align*}
    \operatorname{dist}(z_{T_s(z)},K^\circ)>\delta ,
\end{align*}
so the weak membership oracle for $K^\circ$ returns
$\textnormal{NO}$. This proves the claim.
\end{proof}

\begin{proof}[Proof of Lemma \ref{lem:polar-weak-membership}]
Since
\begin{align*}
    rB_2^p\subseteq C\subseteq RB_2^p,
\end{align*}
the polar body satisfies
\begin{align*}
    \frac1R B_2^p \subseteq C^\circ \subseteq \frac1r B_2^p.
\end{align*}
Thus $C^\circ$ is also well balanced, with known inner and outer radii.
By the standard equivalence between weak membership, weak separation, and
weak optimization for well-balanced convex bodies, a weak membership oracle
for $C^\circ$ yields a polynomial-time weak optimization oracle over
$C^\circ$ \cite{grotschel2012geometric}. In particular, for any $x\in\mathbb R^p$ and any accuracy
$\eta>0$, we can compute a number $\widehat h(x)$ satisfying
\begin{align*}
    \bigl|\widehat h(x)-h_{C^\circ}(x)\bigr|\le \eta,
\end{align*}
where
\begin{align*}
    h_{C^\circ}(x)
    :=
    \sup_{y\in C^\circ}\langle y,x\rangle
\end{align*}
is the support function of $C^\circ$.

By the bipolar theorem,
\begin{align*}
    C = (C^\circ)^\circ,
\end{align*}
and hence
\begin{align*}
    x\in C
    \quad\Longleftrightarrow\quad
    h_{C^\circ}(x)\le 1.
\end{align*}
We now show that points at Euclidean distance more than $\varepsilon$ from
$C$ have a quantitative gap in support function value. Suppose
\begin{align*}
    d:=\operatorname{dist}(x,C)>\varepsilon.
\end{align*}
Let $z\in C$ be the Euclidean projection of $x$ onto $C$, and set
\begin{align*}
    u:=\frac{x-z}{\|x-z\|_2}.
\end{align*}
By the projection optimality condition,
\begin{align*}
    \langle x-z,w-z\rangle\le 0
    \qquad\text{for all } w\in C.
\end{align*}
Equivalently,
\begin{align*}
    \langle u,w\rangle\le \langle u,z\rangle
    \qquad\text{for all } w\in C.
\end{align*}
Therefore
\begin{align*}
    h_C(u)=\sup_{w\in C}\langle u,w\rangle=\langle u,z\rangle.
\end{align*}
Since $rB_2^p\subseteq C\subseteq RB_2^p$ and $\|u\|_2=1$,
\begin{align*}
    r\le h_C(u)\le R.
\end{align*}
Define
\begin{align*}
    y_0:=\frac{u}{h_C(u)}.
\end{align*}
Then $y_0\in C^\circ$, because for every $w\in C$,
\begin{align*}
    \langle y_0,w\rangle
    =
    \frac{\langle u,w\rangle}{h_C(u)}
    \le 1.
\end{align*}
Moreover,
\begin{align*}
    \langle y_0,x\rangle
    =
    \frac{\langle u,z\rangle+\langle u,x-z\rangle}{h_C(u)}
    =
    \frac{h_C(u)+d}{h_C(u)}
    =
    1+\frac{d}{h_C(u)}
    \ge
    1+\frac{d}{R}
    >
    1+\frac{\varepsilon}{R}.
\end{align*}
Hence
\begin{align*}
    h_{C^\circ}(x)>1+\frac{\varepsilon}{R}.
\end{align*}

Now choose
\begin{align*}
    \eta := \frac{\varepsilon}{4R}.
\end{align*}
Using the weak optimization oracle over $C^\circ$, compute $\widehat h(x)$ such that
\begin{align*}
    \bigl|\widehat h(x)-h_{C^\circ}(x)\bigr|\le \eta.
\end{align*}
Return
\begin{align*}
    \textnormal{YES}
    \quad\text{if}\quad
    \widehat h(x)\le 1+\frac{\varepsilon}{2R},
\end{align*}
and return
\begin{align*}
    \textnormal{NO}
    \quad\text{otherwise}.
\end{align*}

If $x\in C$, then $h_{C^\circ}(x)\le 1$, so
\begin{align*}
    \widehat h(x)\le 1+\eta
    =
    1+\frac{\varepsilon}{4R}
    <
    1+\frac{\varepsilon}{2R}.
\end{align*}
Thus the procedure returns $\textnormal{YES}$.

On the other hand, if $\operatorname{dist}(x,C)>\varepsilon$, then
\begin{align*}
    h_{C^\circ}(x)>1+\frac{\varepsilon}{R}.
\end{align*}
Therefore
\begin{align*}
    \widehat h(x)
    \ge
    h_{C^\circ}(x)-\eta
    >
    1+\frac{\varepsilon}{R}-\frac{\varepsilon}{4R}
    =
    1+\frac{3\varepsilon}{4R}
    >
    1+\frac{\varepsilon}{2R}.
\end{align*}
Thus the procedure returns $\textnormal{NO}$.

The output may be arbitrary when $\operatorname{dist}(x,C)\le\varepsilon$,
which is exactly the allowed boundary band for weak membership. The running
time is polynomial in $p$, $\log(R/r)$, and $\log(1/\varepsilon)$, and
uses only the weak membership oracle for $C^\circ$.
\end{proof}

\begin{lemma}\label{lem:diameter-penalty-quasi-subadditive}
    We have $
    d_{s+t}^2\le d_s^2+d_t^2, s+t\le p$ for any $s + t \leq p$. In particular $d_s^2 \lesssim s d_1^2$.
\end{lemma}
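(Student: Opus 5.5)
The plan is to reduce everything to the Euclidean radius $\rho_s:=\sup_{x\in K^{(s)}}\|x\|_2$, using $d_s=2\rho_s$ (central symmetry of $K^{(s)}$, as noted before the definition of the estimator). The target inequality then becomes $\rho_{s+t}^2\le\rho_s^2+\rho_t^2$, and multiplying by $4$ gives the claim for $d$.

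\emph{Step 1: reduce to sparse points.} Since $\|\cdot\|_2$ is convex, its supremum over the convex hull $K^{(s+t)}=\operatorname{conv}(S_{s+t})$ equals its supremum over the generating set $S_{s+t}$. This is exactly the first display in the proof of Lemma~\ref{lem:sparse-diameter-fundamental}. Hence it suffices to bound $\|x\|_2^2$ for $x\in K$ with $\|x\|_0\le s+t$.

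\emph{Step 2: split the support, as in the proof of Lemma~\ref{lem:width-complexity-subadditive}.} Write $\operatorname{supp}(x)=A\cup B$ disjointly with $|A|\le s$ and $|B|\le t$. Coordinate projections preserve membership in $K$. Indeed, $x_A$ is the average of $x$ and its sign-flip on $A^c$, so sign-symmetry and convexity give $x_A\in K$. Thus $x_A\in S_s\subseteq K^{(s)}$ and $x_B\in S_t\subseteq K^{(t)}$. Because $A$ and $B$ are disjoint, the Pythagorean identity gives
\begin{align*}
\|x\|_2^2=\|x_A\|_2^2+\|x_B\|_2^2\le \rho_s^2+\rho_t^2 .
\end{align*}
Taking the supremum over $x$ yields $\rho_{s+t}^2\le\rho_s^2+\rho_t^2$. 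Multiplying by $4$ gives $d_{s+t}^2\le d_s^2+d_t^2$.

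\emph{Step 3: the consequence.} Induct on $s$, applying the inequality with $t=1$ at each step: $d_s^2\le d_{s-1}^2+d_1^2\le\cdots\le s\,d_1^2$.

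There is no real obstacle here. The only points needing care are that the supremum of a convex function over a convex hull is attained on the generators, and that projections stay in $K$. The latter uses only sign-symmetry and convexity, not the full permutation invariance.
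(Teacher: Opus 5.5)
Your proposal is correct and follows the paper's proof essentially step for step. You pass to the radius $\rho_s=d_s/2$, reduce the supremum of $\|\cdot\|_2^2$ over $K^{(s+t)}$ to its $(s+t)$-sparse generators, split the support disjointly, use that coordinate projections stay in $K$, apply Pythagoras, and then iterate with $t=1$. Your explicit justification that $x_A$ is the average of $x$ and its sign-flip on $A^c$ fills in a step the paper only asserts.
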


\begin{proof}[Proof of Lemma \ref{lem:diameter-penalty-quasi-subadditive}]
Let
\begin{align*}
    \rho_s:=\sup_{x\in K^{(s)}}\|x\|_2=\frac{d_s}{2}.
\end{align*}
We first prove that, whenever $s+t\le p$,
\begin{align*}
    \rho_{s+t}^2\le \rho_s^2+\rho_t^2.
\end{align*}
Since $K^{(s+t)}$ is the convex hull of
\begin{align*}
    S_{s+t}:=\{x\in K:\|x\|_0\le s+t\},
\end{align*}
and $x\mapsto \|x\|_2^2$ is convex,
\begin{align*}
    \sup_{x\in K^{(s+t)}}\|x\|_2^2
    =
    \sup_{x\in S_{s+t}}\|x\|_2^2.
\end{align*}
Fix $x\in S_{s+t}$. Split its support into two disjoint sets
$T_1,T_2$ such that
\begin{align*}
    |T_1|\le s,\qquad |T_2|\le t,
    \qquad \operatorname{supp}(x)\subseteq T_1\cup T_2.
\end{align*}
Because $K$ is sign-invariant and convex, coordinate projections preserve
membership in $K$. Hence
\begin{align*}
    x_{T_1}\in K,
    \qquad
    x_{T_2}\in K.
\end{align*}
Therefore
\begin{align*}
    x_{T_1}\in K^{(s)},
    \qquad
    x_{T_2}\in K^{(t)}.
\end{align*}
Since the supports are disjoint,
\begin{align*}
    \|x\|_2^2
    =
    \|x_{T_1}\|_2^2+\|x_{T_2}\|_2^2
    \le
    \rho_s^2+\rho_t^2.
\end{align*}
Taking the supremum over $x\in S_{s+t}$ gives
\begin{align*}
    \rho_{s+t}^2\le \rho_s^2+\rho_t^2.
\end{align*}
Equivalently,
\begin{align*}
    d_{s+t}^2\le d_s^2+d_t^2,
    \qquad s+t\le p.
\end{align*}

Thus we conclude that $d_{s}^2 \leq s d_1^2$. 
\end{proof}

\begin{lemma}[Gaussian width of $K^{(s)}$]\label{lem:Ks-width}
For every $1\le s\le p$,
\begin{align*}
    w(K^{(s)})
    \le
    C d_s\sqrt{s\log(ep/s)}.
\end{align*}
More precisely, since $K^{(s)}$ is centrally symmetric,
\begin{align*}
    w(K^{(s)})
    \le
    C\operatorname{rad}(K^{(s)})\sqrt{s\log(ep/s)}
    =
    C d_s\sqrt{s\log(ep/s)},
\end{align*}
where the last equality is up to an absolute factor.
\end{lemma}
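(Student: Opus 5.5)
The plan is to use the support-function identity of Lemma~\ref{lem:Ks-polar} and to reduce the width to a bound on the Euclidean norm of the top $s$ order statistics of $g$. By Lemma~\ref{lem:width-order-stat},
\begin{align*}
    w(K^{(s)}) = \EE\,\|(g_1^*,\ldots,g_s^*,0,\ldots,0)\|_{K^\circ} = \EE \sup_{x\in K^{(s)}}\langle g_{T_s(g)},x\rangle .
\end{align*}
For every $x\in K^{(s)}$ we have $\|x\|_2\le \operatorname{rad}(K^{(s)})=d_s/2$. Cauchy--Schwarz then gives the pointwise bound
\begin{align*}
    \sup_{x\in K^{(s)}}\langle g_{T_s(g)},x\rangle \le \frac{d_s}{2}\,\|g_{T_s(g)}\|_2 = \frac{d_s}{2}\Big(\sum_{j=1}^s (g_j^*)^2\Big)^{1/2}.
\end{align*}
Equivalently, $K^{(s)}\subseteq \operatorname{rad}(K^{(s)})\cdot\operatorname{conv}\{v\in B_2^p:\|v\|_0\le s\}$. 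This inclusion holds because every $s$-sparse point of $K$ lies in that sparse Euclidean set, and we take convex hulls. The right-hand side is then the classical sparse Euclidean set, whose width is known. Either viewpoint reduces the claim to showing
\begin{align*}
    \EE\Big(\sum_{j=1}^s (g_j^*)^2\Big)^{1/2}\le C\sqrt{s\log(ep/s)} .
\end{align*}

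This last estimate is the only nontrivial step, and it follows from the simultaneous order-statistic bound proved in the proof of Lemma~\ref{lem:width-order-stat}. With probability at least $1-e^{-u}$, we have $g_j^*\le C(\sqrt{\log(ep/j)}+\sqrt u)$ for all $j\le s$. Squaring and summing gives
\begin{align*}
    \sum_{j\le s}(g_j^*)^2\le C\Big(\sum_{j\le s}\log(ep/j)+su\Big).
\end{align*}
The Stirling estimate already used in the proof of Lemma~\ref{lem:matching-lower-bound-gaussian-width} yields $\sum_{j\le s}\log(ep/j)\le 2s\log(ep/s)$. So with probability $1-e^{-u}$,
\begin{align*}
    \|g_{T_s(g)}\|_2\le C\sqrt s\big(\sqrt{\log(ep/s)}+\sqrt u\big).
\end{align*}
Integrating this tail in $u$, as in Lemma~\ref{lem:width-order-stat}, gives the expectation bound with an absolute constant, since $\log(ep/s)\ge 1$ absorbs the $O(\sqrt s)$ contribution from $u$.

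An alternative route is a union bound over the $\binom{p}{s}\le (ep/s)^s$ supports $T$ of $\sup_{T}\|g_T\|_2$. The Lipschitz concentration of $\|g_T\|_2$ around $\sqrt s$ then gives the same bound. I would keep this as a check.

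Combining the steps yields $w(K^{(s)})\le C\operatorname{rad}(K^{(s)})\sqrt{s\log(ep/s)}$. Since $K^{(s)}$ is centrally symmetric, $\operatorname{rad}(K^{(s)})=d_s/2$, which gives the statement. The main obstacle is simply the clean accounting of $\sum_j\log(ep/j)$, and that is already handled by the earlier Stirling bound. No genuine difficulty is expected.
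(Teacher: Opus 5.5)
Your proof is correct, and its skeleton matches the paper's. Both arguments use Cauchy--Schwarz against $\operatorname{rad}(K^{(s)})=d_s/2$ to reduce the claim to bounding $\EE\max_{|T|\le s}\|g_T\|_2=\EE\|g_{T_s(g)}\|_2$.

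The paper reaches this reduction without the support-function identity. The supremum of $\langle g,\cdot\rangle$ over $K^{(s)}$ equals the supremum over the generators $S_s$, and for $v\in S_s$ with support $T$ one has $\langle g,v\rangle\le\|g_T\|_2\|v\|_2$. Your inclusion $K^{(s)}\subseteq\operatorname{rad}(K^{(s)})\operatorname{conv}\{v\in B_2^p:\|v\|_0\le s\}$ is the same observation. So routing through Lemma~\ref{lem:width-order-stat}, which uses sign and permutation invariance, is harmless but unnecessary.

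The genuine difference is the final expectation bound. The paper uses what you keep as a check: a union bound over the at most $(ep/s)^s$ supports plus concentration of $\|g_T\|_2$ around $\sqrt s$, and it only sketches this. You instead reuse two ingredients already proved in the paper. One is the simultaneous order-statistic tail bound from the proof of Lemma~\ref{lem:width-order-stat}. The other is the estimate $\sum_{j\le s}\log(ep/j)\le 2s\log(ep/s)$ from the proof of Lemma~\ref{lem:matching-lower-bound-gaussian-width}. Your squaring, summing and tail integration are all fine: the $\sqrt{su}$ deviation integrates to $O(\sqrt s)$, which is absorbed because $\log(ep/s)\ge 1$.

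Your route makes the lemma fully self-contained within the paper and gives an explicit high-probability bound along the way. The paper's route is the shorter textbook argument and does not depend on any other lemma in the paper.
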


\begin{proof}[Proof of Lemma \ref{lem:Ks-width}]
Because a linear functional has the same supremum over a set and over its convex hull,
\begin{align*}
    \sup_{v\in K^{(s)}}\langle g,v\rangle
    =
    \sup_{v\in S_s}\langle g,v\rangle .
\end{align*}
For $v\in S_s$, if $T=\operatorname{supp}(v)$, then $|T|\le s$ and
\begin{align*}
    \langle g,v\rangle
    =
    \langle g_T,v_T\rangle
    \le
    \|g_T\|_2\|v\|_2
    \le
    \operatorname{rad}(K^{(s)})\|g_T\|_2.
\end{align*}
Therefore
\begin{align*}
    \sup_{v\in K^{(s)}}\langle g,v\rangle
    \le
    \operatorname{rad}(K^{(s)})
    \max_{|T|\le s}\|g_T\|_2.
\end{align*}
It remains to use the standard sparse-Gaussian bound
\begin{align*}
    \EE\max_{|T|\le s}\|g_T\|_2
    \le
    C\sqrt{s\log(ep/s)}.
\end{align*}
For completeness, this follows by a union bound over the at most $(ep/s)^s$ supports of size $s$ and the usual concentration of $\chi_s$ random variables. See also Lemma 5.14 of \cite{prasadan2026information} for a similar argument. Combining the two displays proves the claim.
\end{proof}

\end{document}